\newtheorem{theorem}{Theorem}[section]
\newtheorem{corollary}{Corollary}[section]
\newtheorem{lemma}{Lemma}[section]
\theoremstyle{definition}
\newtheorem{definition}{Definition}[section]
\theoremstyle{remark}
\newtheorem{remark}{Remark}[section]
\numberwithin{equation}{section}
\newcommand{\e}{\varepsilon}
\renewcommand{\O}{\Omega}
\renewcommand{\liminf}{\varliminf}
\renewcommand{\limsup}{\varlimsup}
\newcommand{\field}[1]{\mathbb{#1}}
\newcommand{\R}{\field{R}}
\newcommand{\er}{\eqref}
\DeclareMathOperator{\supp}{supp}
\renewcommand{\O}{\Omega}
\newcommand{\f}{\varphi}
\begin{document}
\title{On a variational approach to the Navier-Stokes Equations\footnote{My old unpublished paper
(2007)}}
\maketitle
\begin{center}
\textsc{Arkady Poliakovsky \footnote{E-mail:
poliakov@math.bgu.ac.il}
}\\[3mm]
Department of Mathematics, Ben Gurion University of the Negev,\\
P.O.B. 653, Be'er Sheva 84105, Israel
\\[2mm]
\end{center}
\section{Introduction}
Let $\O\subset\R^N$ be a domain. The initial-boundary value problem
for the incompressible Navier-Stokes Equations is the following one,
\begin{equation}\label{IBNS}
\begin{cases}\;(i)\;\;\,\frac{\partial v}{\partial t}+div_x\,(v\otimes
v)+\nabla_x p=\nu\Delta_x v+f\quad\quad
\forall(x,t)\in\O\times(0,T)\,,\\\,(ii)\;\; div_x\,
v=0\quad\quad\forall(x,t)\in\O\times(0,T)\,,
\\(iii)\;\;v=0\quad\quad\forall(x,t)\in\partial\O\times(0,T)\,,
\\(iv)\,\;\;v(x,0)=v_0(x)\quad\quad\forall x\in\O\,.
\end{cases}
\end{equation}
Here $v=v(x,t):\O\times(0,T)\to\R^N$ is an unknown velocity,
$p=p(x,t):\O\times(0,T)\to\R$ is an unknown pressure, associated
with $v$, $\nu>0$ is a given constant viscosity,
$f:\O\times(0,T)\to\R^N$ is a given force field and $v_0:\O\to\R^N$
is a given initial velocity. The existence of weak solution to
\er{IBNS} satisfying the Energy inequality was first proved in the
celebrating works of Leray (1934). There are many different
procedures for constructing weak solutions (see Leray
\cite{Ler1},\cite{Ler2} (1934); Kiselev and Ladyzhenskaya \cite{KaL}
(1957); Shinbrot \cite{Sh} (1973)). The most common methods are
based on the so called Faedo-Galerkin approximation process.
Application of Faedo-Galerkin method for \er{IBNS} was first
considered by Hopf in \cite{Hopf}. We also refer to Masuda~\cite{Masuda} for the problem in higher dimension. In this paper we present a
variational method to investigate the Navier-Stokes equations that we
thought to be completely new, see however the remarks below.
As an application of this method we give a relatively simple
proof of the existence of weak solutions to the problem \er{IBNS}.

 Let us briefly describe our method. Consider for simplicity $f=0$ in
\er{IBNS}. For every smooth $u:\bar\O\times[0,T]\to\R^N$ satisfying
conditions $(ii)-(iv)$ of \er{IBNS} define the energy functional
\begin{equation}\label{energ}
E(u):=\frac{1}{2}\int_0^T\int_\O\Big(\nu|\nabla_x
u|^2+\frac{1}{\nu}|\nabla_x \bar
H_u|^2\Big)\,dxdt+\frac{1}{2}\int_\O|u(x,T)|^2\,dx\,,
\end{equation}
where $\bar H_u(x,t)$ solves the following Stokes system for every
$t\in(0,T)$,
\begin{equation}\label{stokes}
\begin{cases}\Delta_x \bar H_u=\Big(\frac{\partial u}{\partial
t}+div_x\,(u\otimes u)\Big)+\nabla_x p\quad\quad x\in\O\,,\\
div_x\,\bar H_u=0\quad\quad x\in\O\,,\\ \bar H_u=0\quad\quad\forall
x\in\partial\O\,.\end{cases}
\end{equation}
A simple integration by parts gives
\begin{equation}\label{energ1}
E(u)=\frac{1}{2\nu}\int_0^T\int_\O\Big(|\nu\nabla_x u-\nabla_x \bar
H_u|^2\Big)\,dxdt+\frac{1}{2}\int_\O|v_0(x)|^2\,dx\,.
\end{equation}
Therefore, if there exists at least a smooth solution to \er{IBNS}
(with $f=0$) then a smooth function $u:\O\times(0,T)\to\R^N$ will be
a solution to \er{IBNS} (with $f=0$) if and only if it is a
minimizer of the functional in \er{energ} among all smooth
divergence free vector fields satisfying the boundary and the
initial value conditions of \er{IBNS}. For the rigorous formulations
and statements, see Section \ref{odin}. This remark relates the
problem of existence of solutions of the Navier-Stokes equations to
that  of minimizing the energy $E(u)$.

 Unfortunately, when applying this method to the Navier-Stokes
Equation one
meets certain difficulties, for example in proving the existence of
minimizers to $E$. But we can apply this method to a suitable
approximation of problem \er{IBNS}. We approximate \er{IBNS} by
replacing the nonlinear term $div_x(v\otimes v)$ with the terms
$div_x\big\{f_n(|v|^2)(v\otimes v)\big\}$, where $f_n:\R^+\to\R^+$
are regular cutoff functions satisfying $f_n(s)=1$ for $s\leq n$ and
$f_n(s)=0$ for $s>2n$. The approximating problems are simpler than
\er{IBNS}, since the nonlinear term has higher integrability. Next
we consider the energies $E_n$ corresponding to the approximating
problems and investigate the Euler-Lagrange equations of $E_n$ and
the existence of minimizers. In this way we get solutions to the
approximating problems which satisfy the energy equality (in fact
these solutions will be regular if the initial data and the domain
are). Next we pass to the limit for $n\to\infty$ and obtain a weak
solution to \er{IBNS}. For the details see Section \ref{dva}.

 After completing the first version of this paper I learned that an
energy-functional, very similar to \er{energ}, was used by Ghoussoub
and his coauthors, see \cite{Gho,GosM,GosM1,GosTz},
 to prove existence of weak
solutions for \er{IBNS} and to study many other evolution equations.
The basic variational principle behind this approach was first
introduced by Brezis and Ekeland, see \cite{Brez} (I wasn't aware of
this article as well).
 The main new feature of our method is that, unlike the previous works
 mentioned above,
 we menage to deduce directly from the Euler-Lagrange equation associated with
 \eqref{energ} that the minimizer is a solution of the original
 problem \eqref{IBNS}.

We shall now demonstrate our method in  the simple
example of the heat equation. In this case, the energy-functional
takes
the form
\begin{equation}\label{energht}
\bar E(u):=\frac{1}{2}\int_0^T\int_\O\Big(|\nabla_x
u|^2+\big|\nabla_x\{\Delta_x^{-1}(\partial_t
u)\}\big|^2\Big)\,dxdt+\frac{1}{2}\int_\O|u(x,T)|^2\,dx\,,
\end{equation}
where $\Delta^{-1}f$ is the solution of
\begin{equation*}
\begin{cases}\Delta y=f\quad\quad x\in\O\,,\\ y=0\quad\quad\forall
x\in\partial\O\,.\end{cases}
\end{equation*}
The variational functional of type \er{energht} was first considered
by Brezis and Ekeland~\cite{Brez} in the more general case of
gradient flows. Let us investigate the Euler-Lagrange equation for
\er{energht}. If $u$ satisfies $u(x,t)=0$ for every
$(x,t)\in\partial\O\times(0,T)$ and $u(x,0)=v_0(x)$, then, as before,
\begin{equation*}
\bar E(u):=\frac{1}{2}\int_0^T\int_\O\Big(\big|\nabla_x\{
u-\Delta_x^{-1}(\partial_t u)\}\big|^2\Big)\,dxdt+\frac{1}{2}\int_\O
v_0^2(x)\,dx\,,
\end{equation*}
Set $W_u:=u-\Delta_x^{-1}(\partial_t u)$. Then, for every minimizer
$u$ and for every smooth test function $\delta(x,t)$ satisfying
$\delta(x,t)=0$ for every $(x,t)\in\partial\O\times(0,T)$ and
$\delta(x,0)=0$, we obtain
\begin{multline*}
0=\frac{d \bar E(u+s\delta)}{ds}\Big|_{(s=0)}=\lim\limits_{s\to
0}\frac{1}{2s}\int_0^T\int_\O\Big(|\nabla_x
W_{(u+s\delta)}|^2-|\nabla_x W_{u}|^2\Big)=\\
-\lim\limits_{s\to 0}\frac{1}{2s}\int_0^T\int_\O\big(\Delta_x
W_{(u+s\delta)}-\Delta_x W_{u}\big)\cdot\big(
W_{(u+s\delta)}+W_{u}\big)=\\
\lim\limits_{s\to 0}\frac{1}{2}\int_0^T\int_\O\big(-\Delta_x
\delta+\partial_t\delta\big)\cdot\big( W_{(u+s\delta)}+W_{u}\big)=
\int_0^T\int_\O\big(\nabla W_{u}\cdot\nabla_x
\delta+W_{u}\cdot\partial_t\delta\big)\,.
\end{multline*}
Since $\delta$ was arbitrary (in particular $\delta(x,T)$ is free)
we deduce that $\Delta_x W_u+\partial_t W_u=0$, $W_u(x,T)=0$ and
$W_u=0$ if $x\in\partial\O$. Changing variables $\tau:=T-t$ gives
\begin{equation*}
\begin{cases}
\partial_\tau W_u=\Delta_x W_u\quad\quad\forall (x,\tau)\in\O\times(0,T)\,,\\
W_u(x,0)=0\,,\\
W_u(x,\tau)=0\quad\quad\quad\forall
(x,\tau)\in\partial\O\times(0,T)\,.
\end{cases}
\end{equation*}
Therefore $W_u=0$ and then $\Delta_x u=\partial_t u$, i.e., $u$ is the solution of
the heat equation.

\section{Preliminaries}
For two matrices $A,B\in\R^{p\times q}$  with $ij$-th entries
$a_{ij}$ and $b_{ij}$ respectively, we write
$A:B\,:=\,\sum\limits_{i=1}^{p}\sum\limits_{j=1}^{q}a_{ij}b_{ij}$.\\
Given a vector valued function
$f(x)=\big(f_1(x),\ldots,f_k(x)\big):\O\to\R^k$ ($\O\subset\R^N$) we
denote by $\nabla_x f$ the $k\times N$ matrix with
$ij$-th entry $\frac{\partial f_i}{\partial x_j}$.\\
For a matrix valued function $F(x):=\{F_{ij}(x)\}:\R^N\to\R^{k\times
N}$ we denote by $div\,F$ the $\R^k$-valued vector field defined by
$div\,F:=(l_1,\ldots,l_k)$ where
$l_i=\sum\limits_{j=1}^{N}\frac{\partial F_{ij}}{\partial x_j}$.
%
%
%
%
%
%
%
%
Throughout the rest of the paper we assume that $\O$ is domain in
$\R^N$.
\begin{definition}\label{dXY}
We denote:
\begin{itemize}
\item
By $\mathcal{V}_N$ the space $\{\f\in
C_c^\infty(\O,\R^N):\,div\,\f=0\}$ and by $L_N$ the space, which is
the closure of $\mathcal{V}_N$ in the space $L^2(\O,\R^N)$, endowed
with
the norm $\|\f\|:=\big(\int_\O|\f|^2dx\big)^{1/2}$.

\item
By $\bar H^1_0(\O,\R^N)$ the closure of $C_c^\infty(\O,\R^N)$ with
respect to the norm
$|||\f|||:=\big(\int_\O|\nabla\f|^2dx\big)^{1/2}$.
This space differ from $H^1_0(\O,\R^N)$ only in the case of
unbounded domain.

\item
By $V_N$ the closure of $\mathcal{V}_N$ in $\bar H^1_0(\O,\R^N)$.

\item
By $V_N^{-1}$ the space dual to $V_N$.
%
%
%
%
%
%

\item
By $\mathcal{Y}$ the space
$$\mathcal{Y}:=\{\f(x,t)\in
C^\infty_c(\O\times[0,T],\R^N):\,div_x\,\f=0\}\,.$$
\end{itemize}
\end{definition}

\begin{remark}\label{rem}
It is obvious that $u\in\mathcal{D}'(\O,\R^N)$ (rigorously the
equivalence class of $u$, up to gradients) belongs to $V_N^{-1}$ if
and only if there exists $w\in V_N$ such that
$$\int_\O\nabla w:\nabla \delta\, dx=
-<u,\delta>\quad\quad\forall\delta\in V_N\,.$$ In particular $\Delta
w=u+\nabla p$ as a distribution and
$$|||w|||=\sup\limits_{\delta\in V_N,\;|||\delta|||\leq
1}<u,\delta>=|||u|||_{-1}\,.$$
\end{remark}
\begin{definition}\label{dffgghhjjkt}
We will say that the distribution
$l\in\mathcal{D}'(\O\times(0,T),\R^N)$ belongs to
$L^2(0,T;V_N^{-1})$, if there exists $v(\cdot,t)\in
L^2(0,T;V_N^{-1})$, such that for every $\psi(x,t)\in
C^\infty_c(\O\times(0,T),\R^N)$, satisfying $div_x\,\psi=0$, we have
$$<l(\cdot,\cdot),\psi(\cdot,\cdot)>=\int_0^T<v(\cdot,t),\psi(\cdot,t)>\,dt\,.$$
\end{definition}
\begin{remark}\label{rem1}
Let $v(\cdot,t)\in L^2(0,T;V_N^{-1})$. For a.e. $t\in[0,T]$ consider
$V_v(\cdot,t)$ as in Remark \ref{rem}, corresponding to
$v(\cdot,t)$, i.e.
$$\int_\O\nabla_x V_v(x,t):\nabla_x \delta(x)\, dx=
-<v(\cdot,t),\delta(\cdot)>\quad\quad\forall\delta\in V_N\,.$$ Then
it is clear that $V_v(\cdot,t)\in L^2(0,T;V_N)$ and
$$\|V_v\|_{L^2(0,T;V_N)}=\|v\|_{L^2(0,T;V_N^{-1})}\,.$$
\end{remark}
In the sequel we will need several lemmas. In all of them
$\O\subset\R^N$ is a bounded domain. The following Lemma can be
proved in the same way as Lemmas 2.1 and 2.2 in \cite{Gal}.
\begin{lemma}\label{vbnhjjm}
Let $u\in L^2(0,T;V_N)\cap L^\infty(0,T;L_N)$ be such that
$\partial_t u\in L^2(0,T;V_N^{-1})$. Consider $V_0(\cdot,t)\in
L^2(0,T;V_N)$ as in Remark \ref{rem1}, corresponding to $\partial_t
u$. Then we can redefine $u$ on a subset of $[0,T]$ of Lebesgue
measure zero, so that
%
%
$u(\cdot,t)$ will be $L_N$-weakly continuous in $t$ on $[0,T]$.
Moreover, for every $0\leq a<b\leq T$ and for every
$\psi(x,t)\in\mathcal{Y}$ (see Definition \ref{dXY})
we will have
\begin{multline}\label{eqmult}
\int_a^b\int_\O\nabla_x V_0:\nabla_x\psi\,
dxdt-\int_a^b\int_\O u\cdot\partial_t\psi\,dxdt\\
=\int_\O u(x,a)\cdot\psi(x,a)dx-\int_\O u(x,b)\cdot\psi(x,b)dx\,.
\end{multline}
\end{lemma}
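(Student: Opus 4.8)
The plan is to recognize \eqref{eqmult} as the integration-by-parts (fundamental theorem of calculus) formula for the pair $(u,\partial_t u)$ in the Gelfand triple $V_N\hookrightarrow L_N\hookrightarrow V_N^{-1}$, and to establish it simultaneously with the weakly continuous representative by mollifying $u$ in the time variable. First I would apply Remark \ref{rem} at a.e.\ fixed $t$ to rewrite the defining relation of $V_0$ as $\langle\partial_t u(\cdot,t),\delta\rangle=-\int_\O\nabla_x V_0(\cdot,t):\nabla_x\delta\,dx$ for all $\delta\in V_N$; taking $\delta=\psi(\cdot,t)$ shows that \eqref{eqmult} is equivalent to the assertion
\[\int_\O u(x,b)\cdot\psi(x,b)\,dx-\int_\O u(x,a)\cdot\psi(x,a)\,dx=\int_a^b\Big(\langle\partial_t u,\psi\rangle+\int_\O u\cdot\partial_t\psi\,dx\Big)dt .\]

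Next I would extend $u$ to a neighbourhood of $[0,T]$ (say by reflection) and set $u^\rho(\cdot,t):=\int_\R \rho^{-1}\zeta((t-s)/\rho)\,u(\cdot,s)\,ds$ with $\zeta$ a standard mollifier. Since $u\in L^\infty(0,T;L_N)$, the field $u^\rho$ is smooth in $t$ with values in $L_N$, with uniform bound $\|u^\rho(\cdot,t)\|\le\|u\|_{L^\infty(0,T;L_N)}=:M$, and $\partial_t u^\rho=(\partial_t u)^\rho$. Because this element $\partial_t u^\rho$ actually lies in $L_N$, its $V_N^{-1}$–$V_N$ pairing with $\psi(\cdot,t)\in\mathcal V_N$ coincides with the $L_N$ inner product. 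For such genuinely smooth (in $t$) fields the product rule $\frac{d}{dt}\int_\O u^\rho\cdot\psi\,dx=\langle\partial_t u^\rho,\psi\rangle+\int_\O u^\rho\cdot\partial_t\psi\,dx$ is classical, and integrating it over a subinterval $[a',b']$ gives the displayed identity with $u$ replaced by $u^\rho$.

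I would then let $\rho\to0$. On the right-hand side $u^\rho\to u$ in $L^2(0,T;L_N)$ and $(\partial_t u)^\rho\to\partial_t u$ in $L^2(0,T;V_N^{-1})$, so both time integrals converge. For the boundary terms I would restrict $a',b'$ to be Lebesgue points of the $L_N$-valued map $t\mapsto u(\cdot,t)$, at which $u^\rho(\cdot,a')\to u(\cdot,a')$ and $u^\rho(\cdot,b')\to u(\cdot,b')$ strongly in $L_N$; this yields the displayed identity for a.e.\ $a',b'\in[0,T]$.

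Finally, to produce the everywhere-defined weakly continuous representative and reach all $a,b$ (including the endpoints), I would take time-independent $\psi=\f(x)\in\mathcal V_N\subset\mathcal Y$ in the identity just obtained; this shows that for each such $\f$ the function $t\mapsto\int_\O u(\cdot,t)\cdot\f\,dx$ has a continuous representative $G_\f(t)$, defined for all $t$, with $|G_\f(t)|\le M\,\|\f\|$. For fixed $t$ the bounded linear functional $\f\mapsto G_\f(t)$ extends to $L_N$ and is represented by some $\bar u(\cdot,t)\in L_N$ with $\|\bar u(\cdot,t)\|\le M$; a standard $\e/3$ argument using this uniform bound and the density of $\mathcal V_N$ in $L_N$ shows that $t\mapsto\bar u(\cdot,t)$ is $L_N$-weakly continuous and agrees with $u$ at every Lebesgue point, hence differs from $u$ only on a null set. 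Redefining $u:=\bar u$, both sides of the displayed identity become continuous in $a$ and $b$, so the a.e.\ identity extends to all $0\le a<b\le T$; re-inserting $\langle\partial_t u,\psi\rangle=-\int_\O\nabla_x V_0:\nabla_x\psi\,dx$ then gives exactly \eqref{eqmult}. I expect the main obstacle to be precisely this last step, namely constructing the weakly continuous representative and controlling the boundary terms uniformly enough to upgrade the a.e.\ identity to one valid at the endpoints, rather than the routine mollification estimates.
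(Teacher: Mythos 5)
Your argument is correct and is essentially the proof the paper intends: the paper does not prove Lemma \ref{vbnhjjm} itself but defers it to Lemmas 2.1 and 2.2 of Galdi's notes, which use exactly this strategy of time-mollification, passage to the limit at Lebesgue points of $t\mapsto u(\cdot,t)$, and construction of the $L_N$-weakly continuous representative from the uniform $L^\infty(0,T;L_N)$ bound and density of $\mathcal{V}_N$. (A vestigial proof left in the source comments does the same thing with piecewise-linear temporal cutoffs in place of smooth mollifiers, extracting the one-sided weak averages $u(\cdot,t_0^\pm)$ directly, but the substance is identical to yours.)
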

\begin{remark}\label{rem3}
Let $F\in Lip\,(\R^N,\R^{N\times N})$ satisfying $F(0)=0$. Then for
every $u\in L^\infty(0,T;L_N)$ we have $F(u)\in
L^\infty\big(0,T;L^2(\O,\R^{N\times N})\big)$ and therefore $div_x\,
F(u)\in L^2(0,T;V_N^{-1})$. If in addition $\partial_t u\in
L^2(0,T;V_N^{-1})$ then we obtain $\partial_t u+div_x\, F(u)\in
L^2(0,T;V_N^{-1})$.
\end{remark}
We have then the following Corollary to Lemma \ref{vbnhjjm}.
\begin{corollary}\label{colkk}
Let $u$ be as in Lemma \ref{vbnhjjm} and let $F\in
Lip\,(\R^N,\R^{N\times N})$ satisfying $F(0)=0$. Assume, in
addition, that $u(\cdot,t)$ is $L_N$-weakly continuous in $t$ on
$[0,T]$ (see Lemma \ref{vbnhjjm}). Consider $V(\cdot,t)\in
L^2(0,T;V_N)$ as in Remark \ref{rem1}, corresponding to $\partial_t
u+div_x\, F(u)$. Then for every $0\leq a<b\leq T$ and for every
$\psi(x,t)\in\mathcal{Y}$ we have
\begin{multline}\label{eqmultnep}
\int_a^b\int_\O\nabla_x V:\nabla_x\psi\,
dxdt-\int_a^b\int_\O\big(u\cdot\partial_t\psi+F(u):\nabla_x\psi\big)\,dxdt\\
=\int_\O u(x,a)\cdot\psi(x,a)dx-\int_\O u(x,b)\cdot\psi(x,b)dx\,.
\end{multline}
\end{corollary}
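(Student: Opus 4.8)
The plan is to deduce \er{eqmultnep} from \rlemma{vbnhjjm} by writing the field $V$ as a sum of the field $V_0$ coming from $\partial_t u$ and a field $V_1$ coming from $div_x\, F(u)$, and then identifying the contribution of $V_1$ with the bilinear term $\int F(u):\nabla_x\psi$. By Remark~\ref{rem3} we have $div_x\, F(u)\in L^2(0,T;V_N^{-1})$, so that $\partial_t u+div_x\, F(u)\in L^2(0,T;V_N^{-1})$ and all the fields below are well defined elements of $L^2(0,T;V_N)$ via Remark~\ref{rem1}.

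The conceptual core of the argument is a superposition observation. Let $V_0(\cdot,t)$ and $V_1(\cdot,t)$ in $L^2(0,T;V_N)$ be the fields associated, through Remark~\ref{rem1}, with $\partial_t u$ and with $div_x\, F(u)$ respectively. The defining relation in Remark~\ref{rem} is linear in the functional, and for a given functional the associated field is unique: if two fields satisfy it, their difference $w$ obeys $\int_\O\nabla_x w:\nabla_x\delta\,dx=0$ for all $\delta\in V_N$, and taking $\delta=w$ forces $|||w|||=0$. Hence the field $V$ associated with the sum $\partial_t u+div_x\, F(u)$ equals $V_0+V_1$ for a.e. $t\in(0,T)$.

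It then remains to handle the two pieces. For $V_0$ I would simply invoke \rlemma{vbnhjjm} applied to the weakly continuous representative of $u$; this already furnishes the $u\cdot\partial_t\psi$ term and the two boundary terms on the right of \er{eqmultnep}. For $V_1$ I would test its defining identity from Remark~\ref{rem1} with $\delta=\psi(\cdot,t)$, which is admissible because $\psi(\cdot,t)\in\mathcal{V}_N\subset V_N$ for each fixed $t$, giving $\int_\O\nabla_x V_1(\cdot,t):\nabla_x\psi(\cdot,t)\,dx=-<div_x\, F(u)(\cdot,t),\psi(\cdot,t)>$. Since $\psi$ is smooth, compactly supported and divergence free in $x$, integrating by parts slice-wise in $t$ turns the pairing into $-\int_\O F(u):\nabla_x\psi\,dx$, so that $\int_a^b\int_\O\nabla_x V_1:\nabla_x\psi\,dxdt=\int_a^b\int_\O F(u):\nabla_x\psi\,dxdt$. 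Adding the two contributions and using $V=V_0+V_1$ yields \er{eqmultnep} after rearranging.

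I expect the only delicate points to be bookkeeping rather than substance: keeping the sign conventions of $div_x$ and of the pairing $<\cdot,\cdot>$ straight in the integration by parts for $V_1$, and justifying that $t\mapsto\int_\O\nabla_x V_1(\cdot,t):\nabla_x\psi(\cdot,t)\,dx$ is integrable so that the $t$-integration commutes with the slice-wise identity (this follows from $V_1\in L^2(0,T;V_N)$ together with the boundedness of $\psi,\nabla_x\psi$). The superposition $V=V_0+V_1$, although elementary, is what makes the nonlinear term $F(u)$ enter only through the harmless bilinear quantity $\int F(u):\nabla_x\psi$, and is therefore the step I would state most carefully.
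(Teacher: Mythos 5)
Your argument is correct and is exactly the intended one: the paper states this Corollary without proof, treating it as immediate from Lemma \ref{vbnhjjm} together with Remark \ref{rem3}, via precisely the superposition $V=V_0+V_1$ and the identity $\langle div_x\,F(u),\psi\rangle=-\int_\O F(u):\nabla_x\psi\,dx$ that you spell out. The only nit is wording: the slice-wise integration by parts for the $V_1$ term is in $x$ at each fixed $t$, not ``in $t$''.
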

We will need in the sequel the following compactness result.
\begin{lemma}\label{ComTem1}
Let $\{u_n\}\subset L^2(0,T;V_N)\cap L^\infty(0,T;L_N)$ be a
subsequence, bounded in $L^\infty(0,T;L_N)$ and such that
\begin{equation}\label{waeae1stggg}
u_n\rightharpoonup u_0 \quad\text{weakly in } L^2(0,T;V_N)\,,
\end{equation}
and
\begin{equation}\label{waeae12}
u_n(\cdot,t)\rightharpoonup u_0(\cdot,t) \quad\text{weakly in }
L_N\quad\forall t\in(0,T)\,.
\end{equation}
Then
\begin{equation}\label{staeae1gllluk}
u_n\to u_0 \quad\text{strongly in } L^2(0,T;L_N)\,.
\end{equation}
\end{lemma}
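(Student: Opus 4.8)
The plan is to reduce the desired strong $L^2(0,T;L_N)$-convergence to an interpolation between the $L^2(0,T;V_N)$-bound (available from the weak convergence \er{waeae1stggg}) and the pointwise-in-time weak convergence \er{waeae12}, exploiting that on the bounded domain $\O$ the embedding $V_N\hookrightarrow L_N$ is compact. Setting $w_n:=u_n-u_0$, it suffices to prove $\int_0^T\|w_n(\cdot,t)\|_{L_N}^2\,dt\to 0$, where $w_n\weakly 0$ weakly in $L^2(0,T;V_N)$, $w_n(\cdot,t)\weakly 0$ weakly in $L_N$ for every $t$, and $\{w_n\}$ is bounded both in $L^2(0,T;V_N)$ and in $L^\infty(0,T;L_N)$ (the uniform $L_N$-bound on $u_0$ follows from weak lower semicontinuity of the norm together with \er{waeae12}).

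The main tool is an Ehrling-type inequality. Since $V_N\hookrightarrow L_N$ is compact (Rellich--Kondrachov on the bounded $\O$) while $L_N\hookrightarrow V_N^{-1}$ is continuous, for every $\eta>0$ there is a constant $C_\eta>0$ with $\|\f\|_{L_N}\leq\eta\,|||\f|||+C_\eta\|\f\|_{V_N^{-1}}$ for all $\f\in V_N$. Applying this to $\f=w_n(\cdot,t)$ for a.e. $t$, squaring and integrating over $(0,T)$, I obtain
\[
\int_0^T\|w_n(\cdot,t)\|_{L_N}^2\,dt\;\leq\;2\eta^2\|w_n\|_{L^2(0,T;V_N)}^2+2C_\eta^2\int_0^T\|w_n(\cdot,t)\|_{V_N^{-1}}^2\,dt\,.
\]
The first term is bounded by $2\eta^2 M$, where $M:=\sup_n\|w_n\|_{L^2(0,T;V_N)}^2<\infty$, and can thus be made arbitrarily small by the choice of $\eta$, uniformly in $n$.

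It then remains to show that the second term tends to $0$ as $n\to\infty$. The key observation is that the embedding $L_N\hookrightarrow V_N^{-1}$ is itself compact: it is the adjoint of the compact embedding $V_N\hookrightarrow L_N$, and $L_N$ is its own dual (the identification of $\|\cdot\|_{V_N^{-1}}$ with the genuine dual norm being exactly the content of Remark \ref{rem}). Consequently, for each fixed $t$ the weak convergence $w_n(\cdot,t)\weakly 0$ in $L_N$ upgrades to strong convergence $\|w_n(\cdot,t)\|_{V_N^{-1}}\to 0$. To pass from this pointwise convergence to convergence of the integral I would invoke dominated convergence: the bound $\sup_{n,t}\|w_n(\cdot,t)\|_{L_N}\leq M'$ from $L^\infty(0,T;L_N)$-boundedness, together with the continuous embedding $L_N\hookrightarrow V_N^{-1}$, yields the constant integrable majorant $\|w_n(\cdot,t)\|_{V_N^{-1}}^2\leq C^2(M')^2$, so that $\int_0^T\|w_n(\cdot,t)\|_{V_N^{-1}}^2\,dt\to 0$.

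Combining the two estimates finishes the argument: given $\e>0$, first fix $\eta$ so that $2\eta^2 M<\e/2$, then choose $n$ large enough that the second term drops below $\e/2$; this shows $\limsup_{n}\int_0^T\|w_n(\cdot,t)\|_{L_N}^2\,dt\leq\e$, and letting $\e\to 0$ gives \er{staeae1gllluk}. I expect the only delicate point to be the justification of the two compactness facts — that $V_N\hookrightarrow L_N$, and dually $L_N\hookrightarrow V_N^{-1}$, is compact on the bounded domain — together with checking that the concrete dual norm of Remark \ref{rem} legitimately supports both the Ehrling inequality and this compactness; everything else is a soft interpolation-plus-dominated-convergence argument.
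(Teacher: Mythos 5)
Your proof is correct, but it takes a somewhat different route from the paper's. The paper's argument (in the Appendix) introduces, for each $t$, the Riesz representative $w_n(\cdot,t)\in H^{1}_0(\O,\R^N)$ of the functional $\phi\mapsto\int_\O u_n(x,t)\cdot\phi(x)\,dx$, uses the compact embedding $H^{1}_0(\O,\R^N)\hookrightarrow L^2(\O,\R^N)$ together with \eqref{waeae12} and dominated convergence to get $w_n\to w_0$ strongly in $L^2(0,T;H^{1}_0(\O,\R^N))$, and then passes to the limit in the identity $\int_0^T\int_\O|u_n|^2=\int_0^T\int_\O\nabla_x w_n:\nabla_x u_n$ using the weak convergence \eqref{waeae1stggg}; convergence of the $L^2(0,T;L_N)$-norms plus weak convergence then gives \eqref{staeae1gllluk}. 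You instead prove strong convergence of $u_n-u_0$ to $0$ in $L^2(0,T;V_N^{-1})$ (compactness of the adjoint embedding pointwise in $t$, then dominated convergence --- the same two ingredients, since $\|\nabla_x w_n(\cdot,t)\|_{L^2}$ is precisely a dual-type norm of $u_n(\cdot,t)$) and then upgrade via Ehrling's inequality. So the intermediate step is essentially shared; the difference lies only in the final upgrade, where the paper's weak--strong pairing replaces your interpolation inequality. Your version additionally requires the injectivity of $L_N\hookrightarrow V_N^{-1}$ for Ehrling's lemma, which does hold here because $\mathcal{V}_N$ is dense in $L_N$ (so a function in $L_N$ annihilating all of $V_N$ vanishes); with that point made explicit, your argument is complete, and it has the mild advantage of never leaving the divergence-free scale of spaces.
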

We will give the proof of this Lemma in the Appendix.

\section{Existence of the weak solution to the Navier-Stokes
Equations}\label{dva}
Throughout this section we assume that
$\O\subset\R^N$ is a bounded domain.
\begin{definition}\label{deffffggh}
Let $F(v)=\{F_{ij}(v)\}\in C^1(\R^N,\R^{N\times N})\cap Lip$ satisfy
$F(0)=0$ and $\frac{\partial F_{ij}}{\partial v_m}(v)=\frac{\partial
F_{mj}}{\partial v_i}(v)$ for all $v\in\R^N$ and
$m,i,j\in\{1,\ldots,N\}$. Denote the class of all such $F$ by
$\mathfrak{F}$.
\end{definition}
\begin{remark}\label{rem9888}
Let $F\in\mathfrak{F}$. Then it is clear that there exists
$G(v)=(G_1(v),\ldots, G_N(v))\in C^2(\R^N,\R^N)$, such that
$\frac{\partial G_{j}}{\partial v_i}(v)=F_{ij}(v)$ i.e. $\nabla_v
G(v)=(F(v))^T$.
\end{remark}
Using our variational approach, we will prove in the sequel the
existence of a solution of the following problem
\begin{equation}\label{apprxnn}
\begin{cases}\frac{\partial v}{\partial t}+div_x\,F(v)+\nabla_x p=\Delta_x v\quad\quad
\forall(x,t)\in\O\times(0,T)\,,\\ div_x\,
v=0\quad\quad\forall(x,t)\in\O\times(0,T)\,,
\\v=0\quad\quad\forall(x,t)\in\partial\O\times(0,T)\,,
\\ v(x,0)=v_0(x)\quad\quad\forall x\in\O\,,
\end{cases}
\end{equation}
for every $F\in\mathfrak{F}$, which in addition satisfies the Energy
Equality (see Theorem \ref{premain}). But first of all, in the proof
of the following theorem we would like to explain how this fact
implies the existence of weak solution to the Navier-Stokes
Equation.
\begin{theorem}\label{WeakNavierStokes}
Let $v_0(x)\in L_N$.
Then there exists $u\in L^2(0,T;V_N)\cap L^\infty(0,T;L_N)$
satisfying
\begin{multline}\label{nnhogfc48}
\int_\O v_0(x)\cdot\psi(x,0)\,dx+\int_0^T\int_\O
\big(u\cdot\partial_t\psi+(u\otimes u):\nabla_x
\psi\big)=\int_0^T\int_\O\nabla_x u:\nabla_x\psi\,,
\end{multline}
for every $\psi(x,t)\in C^\infty_c(\O\times[0,T),\R^N)$ such that
$div_x\,\psi=0$, i.e.
$$\Delta_x u=\partial_t u+div_x\,(u\otimes u)+\nabla_x p
\,, \quad\text{and } u(x,0)=v_0(x)\,.$$ Moreover, for a.e.
$\tau\in[0,T]$ we have
\begin{equation}\label{yteqin}
\int_0^\tau\int_\O|\nabla_x u|^2\,dxdt
\leq\frac{1}{2}\bigg(\int_\O v_0^2(x)dx -\int_\O
u^2(x,\tau)dx\bigg)\,.
\end{equation}
%
%
%
%
%
%
%
%
\end{theorem}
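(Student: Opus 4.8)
The plan is to obtain the weak solution $u$ as a limit of solutions $v_n$ to the regularized problems \eqref{apprxnn}, with nonlinearities $F_n(v):=f_n(|v|^2)(v\otimes v)$ chosen so that $F_n\in\mathfrak{F}$ (the cutoff preserves the symmetry condition since each $F_n$ derives from a potential $G_n$, as in Remark~\ref{rem9888}). By Theorem~\ref{premain}, for each $n$ there is a solution $v_n\in L^2(0,T;V_N)\cap L^\infty(0,T;L_N)$ of \eqref{apprxnn} with $F=F_n$ satisfying the energy equality. First I would record the uniform bounds that the energy equality provides: testing against $v_n$ itself (using that the cutoff term $\int_\O \operatorname{div}_x F_n(v_n)\cdot v_n$ vanishes, because $F_n$ comes from a potential and $v_n$ is divergence free, so the convective term does no work) yields
\begin{equation*}
\int_0^\tau\!\!\int_\O|\nabla_x v_n|^2\,dxdt+\frac12\int_\O v_n^2(x,\tau)\,dx=\frac12\int_\O v_0^2(x)\,dx\,,
\end{equation*}
uniformly in $n$. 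Hence $\{v_n\}$ is bounded in $L^2(0,T;V_N)$ and in $L^\infty(0,T;L_N)$.

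Next I would extract weak limits. Passing to a subsequence, $v_n\rightharpoonup u$ weakly in $L^2(0,T;V_N)$ and weak-$*$ in $L^\infty(0,T;L_N)$. To apply the compactness Lemma~\ref{ComTem1} I also need the pointwise-in-$t$ weak convergence \eqref{waeae12}; this is where Corollary~\ref{colkk} enters. The corollary gives, for each $n$, an identity of the form \eqref{eqmultnep} with $F=F_n$ relating $\partial_t v_n+\operatorname{div}_x F_n(v_n)$ to boundary values at the endpoints $a,b$; together with the uniform energy bound this controls the time-increments of $v_n$ in $V_N^{-1}$ and lets me show that $t\mapsto v_n(\cdot,t)$ is, uniformly in $n$, weakly equicontinuous into $L_N$, so that after a further diagonal subsequence $v_n(\cdot,t)\rightharpoonup u(\cdot,t)$ weakly in $L_N$ for every $t$. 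Lemma~\ref{ComTem1} then upgrades this to strong convergence $v_n\to u$ in $L^2(0,T;L_N)$.

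With strong $L^2$ convergence in hand I would pass to the limit in the weak formulation. Writing the weak form of \eqref{apprxnn} for $v_n$ against a test field $\psi\in C_c^\infty(\O\times[0,T),\R^N)$ with $\operatorname{div}_x\psi=0$, the linear terms pass to the limit by the weak $L^2(0,T;V_N)$ convergence, the initial term is fixed, and the crucial nonlinear term $\int_0^T\!\!\int_\O F_n(v_n):\nabla_x\psi$ must converge to $\int_0^T\!\!\int_\O (u\otimes u):\nabla_x\psi$. This is the main obstacle. It splits into two pieces: first, replacing $F_n(v_n)$ by $v_n\otimes v_n$, for which I use that $f_n(|v_n|^2)=1$ except on the set $\{|v_n|^2>n\}$, whose measure is controlled by the uniform $L^\infty(0,T;L_N)$ bound via Chebyshev, so the error is negligible as $n\to\infty$; second, passing $v_n\otimes v_n\to u\otimes u$, which follows from the strong $L^2(0,T;L_N)$ convergence (strong times weak convergence of the two factors, justified by the uniform bounds giving enough integrability after interpolation between $L^2(0,T;L_N)$ and $L^2(0,T;V_N)\hookrightarrow L^2(0,T;L^{2^*})$). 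This yields \eqref{nnhogfc48}.

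Finally, the energy inequality \eqref{yteqin} follows from the $n$-uniform energy equality above by lower semicontinuity: for a.e. $\tau$ we have $\int_0^\tau\!\!\int_\O|\nabla_x u|^2\le\liminf_n\int_0^\tau\!\!\int_\O|\nabla_x v_n|^2$ by weak lower semicontinuity of the Dirichlet norm, while $\int_\O u^2(x,\tau)\,dx\le\liminf_n\int_\O v_n^2(x,\tau)\,dx$ by weak $L_N$ lower semicontinuity at the fixed time $\tau$; substituting into the energy equality and rearranging gives exactly \eqref{yteqin}. The membership $u\in L^2(0,T;V_N)\cap L^\infty(0,T;L_N)$ is inherited from the uniform bounds, completing the argument modulo the technical verification that the weak equicontinuity hypothesis of Lemma~\ref{ComTem1} genuinely holds.
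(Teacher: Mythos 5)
Your overall strategy is exactly the paper's: cut off the nonlinearity, solve the regularized problems via Theorem~\ref{premain}, extract uniform bounds from the energy equality, obtain pointwise-in-$t$ weak $L_N$ convergence, upgrade to strong $L^2(0,T;L_N)$ convergence with Lemma~\ref{ComTem1}, and pass to the limit. There is, however, one concrete error at the very first step. The map $F_n(v):=f_n(|v|^2)(v\otimes v)$ does \emph{not} belong to $\mathfrak{F}$: writing $F_{ij}(v)=f_n(|v|^2)v_iv_j$ one computes
$$\frac{\partial F_{ij}}{\partial v_m}-\frac{\partial F_{mj}}{\partial v_i}=f_n(|v|^2)\big(\delta_{jm}v_i-\delta_{ij}v_m\big)\,, $$
which is not identically zero, so no potential $G_n$ with $\nabla_v G_n=(F_n)^T$ exists and your justification "each $F_n$ derives from a potential" fails. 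Consequently Theorem~\ref{premain} — and the energy equality, which rests through Corollary~\ref{lem2'} precisely on this symmetry — cannot be invoked for your $F_n$. The paper repairs this by taking $F_n(v):=f_n(|v|^2)(v\otimes v)+g_n(|v|^2)I_N$ with $g_n(r)=\frac12\int_0^rf_n(s)\,ds$ (see \eqref{klnlnkmlm}); the added term cancels the defect above, and since it is a multiple of the identity it acts as a pressure, contributing $\int_\O g_n(|u_n|^2)\,div_x\,\psi=0$ against divergence-free test functions, so it never appears in the weak formulation. A second, smaller omission: Theorem~\ref{premain} requires $v_0\in V_N$, whereas here $v_0\in L_N$ only, so you must also approximate the data by $v_0^{(n)}\in\mathcal{V}_N$ with $v_0^{(n)}\to v_0$ in $L_N$; this costs nothing since the energy bound is uniform in $n$.

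The remainder of your argument is sound and matches the paper up to presentation. Your route to \eqref{waeae12} (weak equicontinuity of $t\mapsto\int_\O v_n(\cdot,t)\cdot\phi\,dx$ for $\phi\in\mathcal{V}_N$ plus a diagonal argument over a countable dense family) rests on the same estimate that the paper packages as the Radon-measure bound $|\mu|(\O\times(a,b))\le\bar C(b-a)$; either version works. One caution in the limit of the nonlinear term: smallness of $|\{|v_n|^2>n\}|$ via Chebyshev is not by itself enough to kill $\int_{\{|v_n|^2>n\}}|v_n|^2$ — you need equi-integrability of $\{|v_n|^2\}$, which you do have at that stage because $v_n\to u$ strongly in $L^2(0,T;L_N)$; the paper reaches the same conclusion via a.e.\ convergence of $f_n(|v_n|^2)$ to $1$ and dominated convergence. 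Your lower-semicontinuity derivation of \eqref{yteqin} from the $n$-level energy equality is exactly the intended one.
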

\begin{proof}
Fix some $h(s)\in C^\infty(\R,[0,1])$, satisfying $h(s)=1$ $\forall
s\leq 1$ and $h(s)=0$ $\forall s\geq 2$. For every $n\in\mathbb{N}$
define $f_n(s):=h(s/n)$. Consider
\begin{equation}\label{klnlnkmlm}
F_n(v):=f_n(|v|^2)(v\otimes v)+g_n(|v|^2)I_N\,,
\end{equation}
where $I_N$ is a $N\times N$-unit matrix and
$g_n(r):=\frac{1}{2}\int_0^rf_n(s)ds$. Then for every $n$ we have
$F_n\in\mathfrak{F}$ and there exists $A>0$ such that $|F_n(v)|\leq
A|v|^2$ for every $v$ and $n$. Fix also some sequence
$\{v_{0}^{(n)}\}_{n=1}^{\infty}\subset\mathcal{V}_N$ such that
$v_{0}^{(n)}\to v_0$ strongly in $L_N$ as $n\to\infty$. By Theorem
\ref{premain}, bellow, for every $n$ there exist a function $u_n\in
L^2(0,T;V_N)\cap L^\infty(0,T;L_N)$, such that $\partial_t u_n\in
L^2(0,T;V_N^{-1})$ and $u_n(\cdot,t)$ is $L_N$-weakly continuous in
$t$ on $[0,T]$, which satisfy
\begin{multline}\label{nnhogfc48nnj}
\int_\O v_{0}^{(n)}(x)\cdot\psi(x,0)+\int_0^T\int_\O
\big(u_n\cdot\partial_t\psi+F_n(u_n):\nabla_x
\psi\big)=\int_0^T\int_\O\nabla_x u_n
:\nabla_x\psi\,,
\end{multline}
for every $\psi(x,t)\in C^\infty_c(\O\times[0,T),\R^N)$, such that
$div_x\,\psi=0$. Moreover, by the same Theorem, for every
$\tau\in[0,T]$ we obtain
\begin{equation}\label{yteqinnnj1}
\frac{1}{2}\int_\O u_n^2(x,\tau)dx+\int_0^\tau\int_\O|\nabla_x
u_n|^2\,dxdt
=\frac{1}{2}
\int_\O(v_{0}^{(n)})^2(x)dx\,.
\end{equation}
Therefore, since $v_{0}^{(n)}$ is bounded in $L_N$ we obtain that
there exists $C>0$ independent of $n$ and $t$ such that
\begin{equation}\label{bonmdf}
\|u_n(\cdot,t)\|_{L_N}\leq C\quad\forall n\in\mathbb{N}, t\in[0,T].
\end{equation}
Moreover, $\{u_n\}$ is bounded in $L^2(0,T;V_N)$. By
\er{nnhogfc48nnj} and \er{eqmultnep}, for every $t\in[0,T]$ and for
every $\phi\in\mathcal{V}_N$, we have
\begin{multline}\label{eqmultnesnnn}
\int_\O v_{0}^{(n)}(x)\cdot\phi(x)dx-\int_0^t\int_\O\nabla_x
u_n:\nabla_x\phi
+\int_0^t\int_\O F_n(u_n):\nabla_x\phi\\
=\int_\O u_n(x,t)\cdot\phi(x)dx\,.
\end{multline}
Since $|F_n(u_n)|\leq C|u_n|^2$, by \er{bonmdf},
\begin{equation}\label{jnh}
\|F_n(u_n(\cdot,t))\|_{L^1(\O,\R^{N\times N})}\leq \bar
C\quad\forall n\in\mathbb{N},t\in[0,T]\,.
\end{equation}
In particular $\{F_n(u_n)\}$ is bounded in
$L^1(\O\times(0,T),\R^{N\times N})$. Therefore, there exists a
finite Radon measure $\mu\in\mathcal{M}(\O\times(0,T),\R^{N\times
N})$, such that , up to a subsequence, $F_n(u_n)\rightharpoonup \mu$
weakly as a sequence of finite Radon measures. Then for every
$\psi\in C^\infty_0(\O\times(0,T),\R^{N\times N})$ we have
\begin{equation}\label{mu1}
\lim\limits_{n\to\infty}\int_0^T\int_\O F_n(u_n):\psi\,dxdt=
\int_{\O\times(0,T)}\psi :d\mu\,.
\end{equation}
Moreover, by \er{jnh}, we obtain
\begin{equation}\label{mu2}
|\mu|(\O\times(a,b))\leq\lim_{n\to\infty}\int_a^b\int_\O
|F_n(u_n)|\,dxdt \leq\bar C(b-a)\,.
\end{equation}
Then, by \er{jnh}, \er{mu1} and \er{mu2}, for every
$\phi\in\mathcal{V}_N$ and every $t\in[0,T]$ we obtain
\begin{equation}\label{mu3}
\lim\limits_{n\to\infty}\int_0^t\int_\O
F_n\big(u_n(x,s)\big):\nabla_x\phi(x)\,dxds=
\int_{\O\times(0,t)}\nabla_x\phi(x) :d\mu(x,s)\,.
\end{equation}
But since $u_n$ is bounded in $L^2(0,T;V_N)$, up to a subsequence,
it converge weakly in $L^2(0,T;V_N)$ to the limit $u_0$. We also
know that $u_n(\cdot,0)\rightharpoonup v_0(\cdot)$ weakly in $L_N$.
Plugging these facts and \er{mu3} into \er{eqmultnesnnn}, for every
$t\in[0,T]$ and every $\phi\in\mathcal{V}_N$ we infer
\begin{multline}\label{eqmultnesnnnlim}
\lim_{n\to\infty}\int_\O u_n(x,t)\cdot\phi(x)dx=\\ \int_\O
v_0(x)\cdot\phi(x)dx-\int_0^t\int_\O\nabla_x u_0:\nabla_x\phi
+\int_{\O\times(0,t)}\nabla_x\phi(x) :d\mu(x,s)\,.
\end{multline}
Since $\mathcal{V}_N$ is dense in $L_N$, by \er{bonmdf}, and
\er{eqmultnesnnnlim}, for every $t\in[0,T]$ there exists
$u(\cdot,t)\in L_N$ such that
\begin{equation}\label{waeae1kkd}
u_n(\cdot,t)\rightharpoonup u(\cdot,t) \quad\text{weakly in }
L_N\quad\forall t\in[0,T]\,,
\end{equation}
Moreover,
$\|u(\cdot,t)\|_{L_N}\leq C$. But we have $u_n\rightharpoonup u_0$
in $L^2(0,T;V_N)$, therefore $u=u_0$ and so $u\in L^2(0,T;V_N)\cap
L^\infty(0,T;L_N)$.
%
%
%
%
%
%
%
Then we can use \er{bonmdf}, \er{waeae1kkd} and Lemma \ref{ComTem1},
to deduce that $u_n\to u$ strongly in $L^2(0,T;L_N)$.
Then, up to a subsequence, we have $u_n(x,t)\to u(x,t)$ almost
everywhere in $\O\times(0,T)$. In particular
$f_n\big(|u_n(x,t)|^2\big)\to 1$ almost everywhere in
$\O\times(0,T)$. Then,
\begin{multline*}
\limsup\limits_{n\to\infty}\int_0^T\int_\O\big|f_n(|u_n|^2)(u_n\otimes
u_n)-(u\otimes u)\big|\,dxdt\leq
\\\limsup\limits_{n\to\infty}\int_0^T\int_\O|f_n(|u_n|^2)|\cdot|(u_n\otimes
u_n)-(u\otimes u)|,dxdt+ \limsup\limits_{n\to\infty}\int_0^T\int_\O
u^2\big|f_n(|u_n|^2)-1\big|\,dxdt=0\,.
\end{multline*}
Therefore, letting $n$ tend to $\infty$ in \er{nnhogfc48nnj}, we
obtain \er{nnhogfc48}.
Moreover, by \er{yteqinnnj1}, for a.e. $t\in[0,T]$ we obtain
\er{yteqin}.
This completes the proof.
%
%
%
%
%
%
%
%
\end{proof}

\section{Proof of the existence of solutions to \er{apprxnn}}\label{dva1}
Throughout this section we assume that $\O\subset\R^N$ is a bounded
domain.
The following Lemma can be proved in the same way as Theorem 4.1 in
\cite{Gal}.
\begin{lemma}\label{lem2}
Let $u\in L^2(0,T;V_N)\cap L^\infty(0,T;L_N)$ be such that
$\partial_t u\in L^2(0,T;V_N^{-1})$
and let $u(\cdot,t)$ be $L_N$-weakly continuous in $t$ on $[0,T]$
(see Lemma \ref{vbnhjjm}).
Consider $V_0(\cdot,t)\in L^2(0,T;V_N)$ as in Remark \ref{rem1},
corresponding to $\partial_t u$. Then for every $\tau\in [0,T]$ we
have
$$\int_0^\tau\int_\O\nabla_x u:\nabla_x V_0\,dxdt=\frac{1}{2}\bigg(\int_\O u^2(x,0)dx
-\int_\O u^2(x,\tau)dx\bigg)\,.$$
\end{lemma}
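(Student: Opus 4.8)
The plan is to reduce the asserted identity to the abstract integration-by-parts (energy) formula
\[
\int_0^\tau \langle \partial_t u(\cdot,t), u(\cdot,t)\rangle\, dt = \frac{1}{2}\Big(\int_\O u^2(x,\tau)\,dx - \int_\O u^2(x,0)\,dx\Big),
\]
and then to establish this formula by regularizing $u$ in the time variable. First I would invoke Remark \ref{rem1}: since $u(\cdot,t)\in V_N$ for a.e.\ $t$, I may insert $\delta = u(\cdot,t)$ into the relation defining $V_0(\cdot,t)$, which gives, for a.e.\ $t\in(0,T)$,
\[
\int_\O \nabla_x V_0(x,t):\nabla_x u(x,t)\,dx = -\langle \partial_t u(\cdot,t), u(\cdot,t)\rangle\,.
\]
Both sides are integrable on $(0,\tau)$ because $V_0,u\in L^2(0,T;V_N)$ and $\partial_t u\in L^2(0,T;V_N^{-1})$, so integration over $(0,\tau)$ identifies the left-hand side of the Lemma with $-\int_0^\tau\langle\partial_t u, u\rangle\,dt$; the claim then follows at once from the energy formula above.

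To prove the energy formula I would mollify in time. After extending $u$ suitably near the endpoints and setting $u_h := \rho_h *_t u$ for a standard \emph{even} mollifier $\rho_h$, one has $u_h \to u$ in $L^2(0,T;V_N)$ and $\partial_t u_h = \rho_h *_t (\partial_t u) \to \partial_t u$ in $L^2(0,T;V_N^{-1})$, while each $u_h$ is smooth in $t$ with values in $V_N$. For such time-smooth fields the chain rule is classical, namely $\frac{d}{dt}\,\frac{1}{2}\|u_h(\cdot,t)\|_{L_N}^2 = \langle \partial_t u_h(\cdot,t), u_h(\cdot,t)\rangle$. Here the interior term passes to the limit immediately, since $\langle\partial_t u_h, u_h\rangle \to \langle\partial_t u, u\rangle$ in $L^1(0,T)$ by the two convergences above, and $\|u_h(\cdot,t)\|_{L_N}^2 \to \|u(\cdot,t)\|_{L_N}^2$ in $L^1(0,T)$ because $u_h\to u$ in $L^2(0,T;L_N)$. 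Consequently the distributional identity $\frac{d}{dt}\|u(\cdot,t)\|_{L_N}^2 = 2\langle \partial_t u, u\rangle$ holds on $(0,T)$, so $t\mapsto\|u(\cdot,t)\|_{L_N}^2$ agrees a.e.\ with an absolutely continuous function $\phi$ obeying $\phi(\tau)-\phi(0) = 2\int_0^\tau\langle\partial_t u,u\rangle\,dt$.

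The main obstacle is the identification of the boundary values: one must show that $\phi(t)$ equals $\|u(\cdot,t)\|_{L_N}^2$ at \emph{every} $t$, in particular at the endpoints $0$ and $\tau$, where only the $L_N$-weak continuity furnished by Lemma \ref{vbnhjjm} is a priori available. Weak convergence alone yields merely $\|u(\cdot,t)\|_{L_N}^2 \le \phi(t)$ through lower semicontinuity of the norm, not the equality that is needed. To close this gap I would upgrade the weak continuity of $t\mapsto u(\cdot,t)$ into $L_N$ to strong continuity, the evenness of $\rho_h$ guaranteeing that no energy is lost in the limit; once $u$ is continuous into $L_N$, one has $\phi \equiv \|u(\cdot,\cdot)\|_{L_N}^2$ pointwise, and in particular the endpoint values are attained. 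This is precisely the mechanism by which Theorem~4.1 of \cite{Gal} is proved, and the argument transfers verbatim to the divergence-free spaces $V_N\subset L_N\subset V_N^{-1}$ used here. Combining it with the reduction of the first paragraph gives the stated identity for every $\tau\in[0,T]$.
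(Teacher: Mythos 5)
Your strategy is in substance the paper's: the paper offers no argument for Lemma~\ref{lem2} beyond the remark that it ``can be proved in the same way as Theorem 4.1 in \cite{Gal}'', and your reduction to the identity $\int_0^\tau\langle\partial_t u,u\rangle\,dt=\tfrac12\big(\|u(\cdot,\tau)\|_{L_N}^2-\|u(\cdot,0)\|_{L_N}^2\big)$ (by inserting $\delta=u(\cdot,t)\in V_N$ into the relation of Remark~\ref{rem1} defining $V_0$), followed by time-mollification, is exactly that Lions--Magenes/Galdi argument. Your first two paragraphs are sound, and you are right that the only delicate point is attaining the endpoint values when a priori one has only $L_N$-weak continuity, which gives merely $\|u(\cdot,t)\|_{L_N}^2\le\phi(t)$ by lower semicontinuity.

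The one step that is not yet a proof is your resolution of that point: ``upgrade the weak continuity to strong continuity, the evenness of $\rho_h$ guaranteeing that no energy is lost'' is an assertion rather than an argument, and if the strong continuity is meant to be deduced from the everywhere-in-$t$ energy identity, the reasoning is circular, since that identity is what is being proved. Two non-circular ways to close it. (a) Show directly that the mollifications $u_h$ are Cauchy in $C([0,T];L_N)$, using $\sup_t\|u_h(\cdot,t)-u_k(\cdot,t)\|_{L_N}^2\le\|u_h(\cdot,t_*)-u_k(\cdot,t_*)\|_{L_N}^2+2\int_0^T\big|\langle\partial_t u_h-\partial_t u_k,\,u_h-u_k\rangle\big|\,dt$ at a common Lebesgue point $t_*$; this yields strong continuity of $u$ first, and then the endpoint identification is immediate. (b) The route actually taken in \cite{Gal} (and sketched in the source of the proof of Corollary~\ref{lem2'}): avoid strong continuity altogether by testing the weak formulation on $[a,b]$ with the one-sided mollification $u_\e(x,t)=\frac{1}{\e}\int_a^b\eta\big(\frac{s-t}{\e}\big)u(x,s)\,ds$ of $u$ itself, with $\eta$ even. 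The interior term $\int_a^b\int_\O u\cdot\partial_t u_\e$ vanishes by the antisymmetry of $\eta'$, and the boundary term is $\int_\O u(x,a)\cdot u_\e(x,a)\,dx=\int_0^1\eta(\tau)\Big(\int_\O u(x,a)\cdot u(x,a+\e\tau)\,dx\Big)d\tau\to\frac12\int_\O|u(x,a)|^2\,dx$, where only weak continuity is used, because the pairing is against the \emph{fixed} element $u(\cdot,a)$, and the factor $\frac12$ comes from $\int_0^1\eta=\frac12$. With either repair your proof is complete and coincides with the intended one; note that strong continuity of $t\mapsto u(\cdot,t)$ in $L_N$ is then a consequence of the lemma, not an ingredient.
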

\begin{corollary}\label{corcor1}
Let $u\in L^2(0,T;V_N)$ be such that $\partial_t u\in
L^2(0,T;V_N^{-1})$. Then $u\in L^\infty(0,T;L_N)$.
\end{corollary}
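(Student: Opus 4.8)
The plan is to reduce the statement to the standard energy identity
\[
\|u(\cdot,t)\|_{L_N}^2=\|u(\cdot,s)\|_{L_N}^2+2\int_s^t\langle\partial_t u(\cdot,r),u(\cdot,r)\rangle\,dr,
\]
understood in the sense that $t\mapsto\|u(\cdot,t)\|_{L_N}^2$ agrees almost everywhere with an absolutely continuous function on $[0,T]$; boundedness on the compact interval then follows at once. Here $V_N\hookrightarrow L_N\hookrightarrow V_N^{-1}$ is a Gelfand triple: the embedding $V_N\hookrightarrow L_N$ is continuous (Poincar\'e, since $\O$ is bounded) and dense (because $\mathcal V_N\subset V_N$ is dense in $L_N$ by Definition \ref{dXY}), and $L_N$ is identified with a subspace of $V_N^{-1}$ so that $\langle w,\f\rangle=\int_\O w\cdot\f\,dx$ for $w\in L_N$, $\f\in V_N$, in accordance with Remark \ref{rem}. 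Since $u\in L^2(0,T;V_N)$, for a.e.\ $t$ we have $u(\cdot,t)\in V_N\subset L_N$, so $\|u(\cdot,t)\|_{L_N}$ is finite a.e.; the only issue is essential boundedness. Note also that $r\mapsto\langle\partial_t u(\cdot,r),u(\cdot,r)\rangle\in L^1(0,T)$ by Cauchy--Schwarz, since $\partial_t u\in L^2(0,T;V_N^{-1})$ and $u\in L^2(0,T;V_N)$.

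To justify the identity I would mollify in time. Let $\rho_\eta$ be a standard mollifier and set $u_\eta:=\rho_\eta*u$, defined on $(\eta,T-\eta)$ with values in $V_N$. Then $u_\eta\in C^\infty\big((\eta,T-\eta);V_N\big)$, the distributional identity $\partial_t u_\eta=\rho_\eta*\partial_t u$ holds in $V_N^{-1}$, and as $\eta\to0$ we have $u_\eta\to u$ in $L^2_{loc}\big((0,T);V_N\big)$ and $\partial_t u_\eta\to\partial_t u$ in $L^2_{loc}\big((0,T);V_N^{-1}\big)$. Because $u_\eta$ is smooth with values in $V_N$ and the $L_N$ inner product is continuous and bilinear on $V_N$, the map $t\mapsto\|u_\eta(\cdot,t)\|_{L_N}^2$ is $C^1$ with derivative $2(\partial_t u_\eta,u_\eta)_{L_N}$; and since $\partial_t u_\eta(\cdot,t)\in V_N\subset L_N$, the Gelfand identification gives $(\partial_t u_\eta,u_\eta)_{L_N}=\langle\partial_t u_\eta,u_\eta\rangle_{V_N^{-1},V_N}$. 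Integrating,
\[
\|u_\eta(\cdot,t)\|_{L_N}^2=\|u_\eta(\cdot,s)\|_{L_N}^2+2\int_s^t\langle\partial_t u_\eta(\cdot,r),u_\eta(\cdot,r)\rangle\,dr\qquad(\eta<s<t<T-\eta).
\]

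Now I would pass to the limit $\eta\to0$. The integrand converges in $L^1_{loc}\big((0,T)\big)$, since the strong convergences of $\partial_t u_\eta$ in $V_N^{-1}$ and of $u_\eta$ in $V_N$ combine via Cauchy--Schwarz; and, extracting a subsequence along which $u_\eta(\cdot,t)\to u(\cdot,t)$ in $V_N$ (hence in $L_N$) for a.e.\ $t$, the boundary terms converge for a.e.\ $s,t$. This yields the energy identity above for a.e.\ $0<s<t<T$. Fixing one admissible value $s_0$, the right-hand side defines, as a function of $t$, an absolutely continuous function $g$ on $[0,T]$ (its integrand being in $L^1$); being continuous on a compact interval, $g$ is bounded. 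Since $\|u(\cdot,t)\|_{L_N}^2=g(t)$ for a.e.\ $t$, we obtain $\operatorname{ess\,sup}_{t\in[0,T]}\|u(\cdot,t)\|_{L_N}^2\le\max_{[0,T]}g<\infty$, i.e.\ $u\in L^\infty(0,T;L_N)$.

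The main obstacle is the rigorous justification of the chain rule for $\|u(\cdot,t)\|_{L_N}^2$: concretely, that the classical time derivative of the mollification coincides with $\rho_\eta*\partial_t u$ as a $V_N^{-1}$-valued map, and that $(\partial_t u_\eta,u_\eta)_{L_N}$ equals the $V_N^{-1}$--$V_N$ duality pairing through the Gelfand identification of $L_N$ inside $V_N^{-1}$. The shrinking domain $(\eta,T-\eta)$ of the mollification causes no trouble near the endpoints, because $g$ extends continuously to all of $[0,T]$, so the null sets near $t=0$ and $t=T$ do not affect the essential supremum.
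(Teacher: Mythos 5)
Your proposal is correct and follows essentially the same route as the paper: both arguments mollify $u$ in time, establish the energy identity for the smooth-in-time mollification, pass to the limit $\eta\to 0$ to get the identity for a.e.\ pair of times, and conclude boundedness from the continuity of the resulting right-hand side. The only cosmetic difference is that you phrase the pairing $\langle\partial_t u,u\rangle$ through the Gelfand triple, whereas the paper realizes it as $-\int_\O\nabla_x u:\nabla_x V_u\,dx$ via the potential $V_u\in L^2(0,T;V_N)$ of Remark \ref{rem1} and invokes Lemma \ref{lem2} for the mollified functions.
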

We will give the proof of this Corollary in the Appendix.

 Next we have the second Corollary to Lemma \ref{lem2}.
\begin{corollary}\label{lem2'}
Let $F\in\mathfrak{F}$
and let $u\in L^2(0,T;V_N)\cap L^\infty(0,T;L_N)$ be such that
$\partial_t u\in L^2(0,T;V_N^{-1})$
and let $u(\cdot,t)$ be $L_N$-weakly continuous in $t$ on $[0,T]$
(see Lemma \ref{vbnhjjm}).
Consider $V(\cdot,t)\in L^2(0,T;V_N)$ as in Remark \ref{rem1},
corresponding to $\partial_t u+div_x\, F(u)$ (see Remark
\ref{rem3}). Then
%
%
%
%
%
%
for every $\tau\in [0,T]$ we have
\begin{equation}\label{cchin}
\int_0^\tau\int_\O\nabla_x u:\nabla_x
V\,dxdt=\frac{1}{2}\bigg(\int_\O u^2(x,0)dx -\int_\O
u^2(x,\tau)dx\bigg)\,.
\end{equation}
\end{corollary}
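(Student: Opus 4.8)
The plan is to reduce the Corollary to Lemma \ref{lem2} by exploiting the linearity of the correspondence in Remark \ref{rem1} and then showing that the nonlinear term $div_x\, F(u)$ contributes nothing to the energy identity. First I would split $V = V_0 + W$, where $V_0 \in L^2(0,T;V_N)$ corresponds to $\partial_t u$ and $W \in L^2(0,T;V_N)$ corresponds to $div_x\, F(u)$; both pieces are well defined, since $\partial_t u \in L^2(0,T;V_N^{-1})$ by hypothesis and $div_x\, F(u) \in L^2(0,T;V_N^{-1})$ by Remark \ref{rem3}, and the defining relation in Remark \ref{rem1} is linear in its right-hand side. Applying Lemma \ref{lem2} to the $V_0$-part immediately yields $\int_0^\tau\int_\O \nabla_x u : \nabla_x V_0\,dxdt = \frac{1}{2}\big(\int_\O u^2(x,0)\,dx - \int_\O u^2(x,\tau)\,dx\big)$, so it remains only to prove that $\int_0^\tau\int_\O \nabla_x u : \nabla_x W \, dxdt = 0$ for every $\tau$.

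For this second step I would first identify the integrand. By the defining relation for $W$, for a.e. $t$ and every $\delta \in V_N$ one has $\int_\O \nabla_x W : \nabla_x \delta\,dx = -\langle div_x\, F(u),\delta\rangle = \int_\O F(u) : \nabla_x \delta\,dx$, the last equality being the distributional action of $div_x\, F(u)$ on divergence-free fields. Since $F$ is Lipschitz with $F(0)=0$ and $u \in L^\infty(0,T;L_N)$, we have $F(u)(\cdot,t) \in L^2(\O,\R^{N\times N})$ for a.e. $t$, so the functional $\delta \mapsto \int_\O F(u):\nabla_x\delta$ is bounded on $V_N$ and the relation extends from $\mathcal{V}_N$ to all of $V_N$. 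Taking $\delta = u(\cdot,t) \in V_N$ (valid for a.e. $t$, since $u \in L^2(0,T;V_N)$) gives the pointwise-in-time identity $\int_\O \nabla_x W(\cdot,t):\nabla_x u(\cdot,t)\,dx = \int_\O F\big(u(\cdot,t)\big):\nabla_x u(\cdot,t)\,dx$.

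The crux is then to show $\int_\O F(u):\nabla_x u \, dx = 0$ for a.e. $t$. Here I would invoke the structure of $\mathfrak{F}$: by Remark \ref{rem9888} there is a potential $G \in C^2(\R^N,\R^N)$ with $\partial G_j/\partial v_i = F_{ij}$, which we may normalize so that $G(0)=0$. The chain rule then gives the pointwise identity $F(u):\nabla_x u = \sum_{i,j} F_{ij}(u)\,\partial_{x_j} u_i = \sum_j \partial_{x_j}\big(G_j(u)\big) = div_x\,\big(G(u)\big)$, so the integrand is a perfect spatial divergence. Since $u(\cdot,t) \in V_N$ vanishes on $\partial\O$ and $G(0)=0$, the field $G\big(u(\cdot,t)\big)$ has zero boundary trace and the integral of its divergence should vanish.

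The main obstacle is making this last integration by parts rigorous, because $G$ has only quadratic growth (its gradient $F$ grows at most linearly), so $G(u)$ lies merely in $W^{1,1}$. I would handle this by approximation: choose $\phi_k \in \mathcal{V}_N$ with $\phi_k \to u(\cdot,t)$ in the $|||\cdot|||$-norm; each $G(\phi_k)$ has compact support, whence $\int_\O F(\phi_k):\nabla_x\phi_k\,dx = \int_\O div_x\big(G(\phi_k)\big)\,dx = 0$. Writing $\int_\O F(\phi_k):\nabla_x\phi_k - \int_\O F(u):\nabla_x u = \int_\O \big(F(\phi_k)-F(u)\big):\nabla_x\phi_k + \int_\O F(u):\big(\nabla_x\phi_k - \nabla_x u\big)$ and using the Lipschitz bound together with the Poincar\'e inequality (valid since $\O$ is bounded, so convergence of the gradients forces $\phi_k \to u$ in $L^2$) shows that both terms tend to $0$. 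Hence $\int_\O F(u):\nabla_x u\,dx = 0$ for a.e. $t$, and integrating over $[0,\tau]$ completes the proof of \eqref{cchin}.
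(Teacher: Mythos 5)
Your proposal is correct and follows essentially the same route as the paper: reduce to Lemma \ref{lem2} (the paper does the splitting of $V$ implicitly rather than naming $V_0$ and $W$ separately), and then kill the term $\int_\O F(u):\nabla_x u\,dx$ for a.e.\ $t$ by using the potential $G$ from Remark \ref{rem9888} to write the integrand as $div_x\,G(\delta)$ for $\delta\in\mathcal{V}_N$ and passing to $u(\cdot,t)\in V_N$ by density and the Lipschitz bound on $F$. Your write-up actually spells out the limiting argument in slightly more detail than the paper does.
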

\begin{proof}
By Lemma \ref{lem2}, for every $\tau\in[0,T]$ we obtain
\begin{multline}\label{kklmn45}
\int_0^\tau\int_\O\nabla_x V:\nabla_x u\, dxdt-\int_0^\tau\int_\O
F(u):\nabla_x u\,dxdt\\ =\frac{1}{2}\bigg(\int_\O u^2(x,0)dx-\int_\O
u^2(x,\tau)dx\bigg)\,.
\end{multline}
%
%
%
%
%
%
%
%
But for almost every $t\in [0,T]$ $u(\cdot,t)\in V_N$, therefore,
for every such fixed $t$ there exists a sequence
$\{\delta_n(\cdot)\}_{n=1}^{\infty}\in\mathcal{V}_N$, such that
$\delta_n(\cdot)\to u(\cdot,t)$ in $V_N$. But for every
$\delta\in\mathcal{V}_N$ we obtain
$$\int_\O F(\delta):\nabla_x \delta=\int_\O\sum\limits_{i=1}^{N}\sum\limits_{j=1}^{N}:
F_{ij}(\delta)\frac{\partial\delta_i}{\partial
x_j}=\int_\O\sum\limits_{i=1}^{N}\sum\limits_{j=1}^{N}\frac{\partial
G_j}{\partial v_i}(\delta)\frac{\partial\delta_i}{\partial x_j}=
\int_\O div_x\, G(\delta)=0\,,$$ where $G$ is as in Remark
\ref{rem9888}. Therefore, since $F$ is Lipshitz function, we obtain
\begin{equation*}
\int_\O F(u(x,t)):\nabla_x
u(x,t)\,dx=\lim\limits_{n\to\infty}\int_\O F(\delta_n(x)):\nabla_x
\delta_n(x)\,dx=0\,.
\end{equation*}
%
%
%
%
%
%
%
%
Therefore, using \er{kklmn45}, we obtain \er{cchin} and the result
follows.
%
%
%
%
%
%
\end{proof}

\begin{definition}\label{defH}
Let $u\in L^2(0,T;V_N)\cap L^\infty(0,T;L_N)$ be such that
$\partial_t u\in L^2(0,T;V_N^{-1})$ and such that $u(\cdot,t)$ is
$L_N$-weakly continuous in $t$ on $[0,T]$. Denote the set of all
such functions $u$ by $\mathcal{R}$. For a fixed $F\in\mathfrak{F}$
and for every $u\in\mathcal{R}$ let $H_u(\cdot,t)\in L^2(0,T;V_N)$
be as in Remark \ref{rem1}, corresponding to $\partial_t u+div_x\,
F(u)$. That is for every $\psi(x,t)\in
C^\infty_c(\O\times(0,T),\R^N)$ such that $div_x\,\psi=0$ we have
\begin{equation*}
\int_0^T\int_\O\big(u\cdot\partial_t\psi+F(u):\nabla_x
\psi\big)dxdt=\int_0^T\int_\O\nabla_x H_u:\nabla_x\psi\,dxdt\,.
\end{equation*}
%
%
%
Define a functional $I_{F}(u):\mathcal{R}\to\R$ by
\begin{equation}\label{hgffck}
I_{F}(u):=\frac{1}{2}\bigg(\int_0^T\int_\O\Big(|\nabla_x u
|^2+|\nabla_x H_u
|^2\Big)dxdt+\int_\O|u(x,T)|^2dx\bigg)\,,
\end{equation}
and for every $v_0\in V_N$ consider the minimization problem
\begin{equation}\label{hgffckaaq1}
\inf\{I_{F}(u):\,u\in\mathcal{R},u(\cdot,0)=v_0(\cdot)\}\,.
\end{equation}
\end{definition}
\begin{remark}\label{rmmmdd}
Since by Corollary \ref{lem2'} we have
\begin{equation*}
\int_0^T\int_\O\nabla_x u:\nabla_x
H_u\,dxdt=\frac{1}{2}\bigg(\int_\O|u(x,0)|^2dx -\int_\O
|u(x,T)|^2dx\bigg)\,,
\end{equation*}
we can rewrite the definition of $I_F$ in \er{hgffck} by
\begin{equation}\label{fyhfypppp}
I_{F}(u):=\frac{1}{2}\bigg(\int_0^T\int_\O|\nabla_x u-\nabla_x
H_u|^2dxdt+\int_\O|u(x,0)|^2dx\bigg)\quad\quad\forall
u\in\mathcal{R}\,.
\end{equation}
\end{remark}
\begin{lemma}\label{EulerLagrange1}
For every $u\in\mathcal{R}$ and every $\delta(x,t)\in \mathcal{Y}$,
such that $\delta(x,0)=0$, we have
\begin{multline}\label{nolkj91}
\lim\limits_{s\to
0}\frac{I_{F}(u+s\delta)-I_{F}(u)}{s}=\\\int_0^T\int_\O\bigg\{\nabla_x
W_{u}:\nabla_x\delta+\partial_t\delta\cdot W_{u}
-\Big(\sum_{j=1}^{N}\delta_j\frac{\partial F}{\partial
u_j}(u)\Big):\nabla_x W_{u}\bigg\}\,dxdt\,,
\end{multline}
where we denote $W_{u}:=u-H_u$.
\end{lemma}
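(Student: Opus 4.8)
The plan is to differentiate $I_F$ in the form provided by Remark~\ref{rmmmdd}, namely $I_F(u)=\frac12\int_0^T\int_\O|\nabla_x W_u|^2\,dxdt+\frac12\int_\O|u(x,0)|^2\,dx$ with $W_u=u-H_u$. Since $\delta(x,0)=0$, the perturbed field $u+s\delta$ again lies in $\mathcal R$ and has the same initial trace as $u$, so the second term is independent of $s$ and
$$\frac{I_F(u+s\delta)-I_F(u)}{s}=\frac12\int_0^T\int_\O\nabla_x\Big(\frac{W_{u+s\delta}-W_u}{s}\Big):\nabla_x\big(W_{u+s\delta}+W_u\big)\,dxdt.$$
Writing $W_{u+s\delta}-W_u=s\delta-(H_{u+s\delta}-H_u)$, the whole matter reduces to controlling the difference quotient of $H$ and then passing to the limit in this bilinear expression.

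Next I would identify the derivative $\dot H$ of $u\mapsto H_u$ in the direction $\delta$. By Remark~\ref{rem1} the map sending a source in $L^2(0,T;V_N^{-1})$ to the associated field in $L^2(0,T;V_N)$ is a linear isometry; since $H_{u+s\delta}$ corresponds to $\partial_t(u+s\delta)+div_x\,F(u+s\delta)$, the difference $H_{u+s\delta}-H_u$ corresponds to $s\,\partial_t\delta+div_x\big(F(u+s\delta)-F(u)\big)$. Because $F\in C^1\cap Lip$ and $\delta$ is bounded with compact support, dominated convergence gives $\frac1s\big(F(u+s\delta)-F(u)\big)\to Q:=\sum_{j=1}^N\delta_j\frac{\partial F}{\partial u_j}(u)$ strongly in $L^2(\O\times(0,T))$, whence, using $\|div_x\,M\|_{V_N^{-1}}\le\|M\|_{L^2}$ together with the isometry,
$$\frac{H_{u+s\delta}-H_u}{s}\longrightarrow \dot H\qquad\text{strongly in }L^2(0,T;V_N),$$
where $\dot H(\cdot,t)$ is, for a.e.\ $t$, the field of Remark~\ref{rem1} corresponding to $(\partial_t\delta+div_x\,Q)(\cdot,t)$. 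The same estimate with the Lipschitz bound for $F$ shows $H_{u+s\delta}\to H_u$, and hence $W_{u+s\delta}\to W_u$, in $L^2(0,T;V_N)$. Feeding these two strong convergences into the displayed difference quotient yields
$$\lim_{s\to0}\frac{I_F(u+s\delta)-I_F(u)}{s}=\int_0^T\int_\O\nabla_x(\delta-\dot H):\nabla_x W_u\,dxdt.$$

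It then remains to evaluate $\int_0^T\int_\O\nabla_x\dot H:\nabla_x W_u$. Here I would use the pointwise-in-$t$ defining relation of Remark~\ref{rem1} for $\dot H$, testing it against $\phi=W_u(\cdot,t)\in V_N$ (legitimate for a.e.\ $t$): this gives $\int_\O\nabla_x\dot H:\nabla_x W_u=-\langle(\partial_t\delta+div_x\,Q)(\cdot,t),W_u(\cdot,t)\rangle$. Since $\partial_t\delta(\cdot,t)\in\mathcal V_N\subset L_N$ its pairing is the $L^2$ inner product, while integration by parts in $x$ turns $\langle div_x\,Q,W_u\rangle$ into $-\int_\O Q:\nabla_x W_u$ (the boundary term vanishing as $W_u\in V_N$). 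Integrating in $t$ then gives $\int_0^T\int_\O\nabla_x\dot H:\nabla_x W_u=-\int_0^T\int_\O\partial_t\delta\cdot W_u+\int_0^T\int_\O Q:\nabla_x W_u$, and substituting this into the previous display produces exactly \eqref{nolkj91}.

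The routine parts are the integrations by parts and the density reductions; the one step demanding genuine care is the strong $L^2(0,T;V_N)$ convergence of the difference quotient $\frac1s(H_{u+s\delta}-H_u)$ to $\dot H$, i.e.\ the differentiability of the map $u\mapsto H_u$. This is where the structural hypotheses on $F$ (that $F\in C^1\cap Lip$ with bounded derivatives) and the isometry of Remark~\ref{rem1} are indispensable: they are what let one interchange the limit $s\to0$ with the spatial integrals and justify the pointwise characterization of $\dot H$ used in the final computation.
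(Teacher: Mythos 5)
Your proof is correct, and it rests on the same two analytic ingredients as the paper's — the strong $L^2$ convergence of $\tfrac1s\big(F(u+s\delta)-F(u)\big)$ to $\sum_j\delta_j\tfrac{\partial F}{\partial u_j}(u)$ (the paper's \eqref{nolkj4}), and the duality/isometry structure of Remark~\ref{rem1} — but it organizes the passage to the limit differently. You linearize the map $u\mapsto H_u$ itself: the isometry turns the $L^2(0,T;V_N^{-1})$-convergence of $\partial_t\delta+\tfrac1s\,div_x\big(F(u+s\delta)-F(u)\big)$ into strong $L^2(0,T;V_N)$-convergence of the difference quotient $\tfrac1s(H_{u+s\delta}-H_u)$ to an explicitly characterized $\dot H$, after which the derivative of the quadratic form is an immediate bilinear computation, and the final identity comes from testing the defining relation of $\dot H$ against $W_u(\cdot,t)$ for a.e.\ $t$. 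The paper never introduces $\dot H$: it factors the difference of squares as (difference)\,:\,(sum), rewrites the difference factor through the duality pairing (its \eqref{nolkj3}), and then only needs the strong convergence $W_{u+s\delta}\to W_u$ in $L^2(0,T;V_N)$, which it obtains by first proving weak convergence \eqref{nolkj6} and then convergence of norms to get \eqref{nolkj7}. Your route is slightly cleaner in that it isolates the G\^ateaux differentiability of $u\mapsto H_u$ as a reusable fact and avoids the weak-to-strong upgrade; the paper's route avoids having to name and characterize $\dot H$. The one step you flag as delicate — the strong convergence of the difference quotient of $H$ — is indeed the crux, and your justification of it via the bound $\|div_x\,M\|_{V_N^{-1}}\le\|M\|_{L^2}$ together with the isometry is sound.
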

\begin{proof}
%
%
%
%
%
%
We have
\begin{multline}\label{nolkj3}
\frac{1}{2s}\int_0^T\int_\O\Big(|\nabla_x
W_{(u+s\delta)}|^2-|\nabla_x W_{u}|^2\Big)=\\
\frac{1}{2s}\int_0^T\int_\O\big(\nabla_x W_{(u+s\delta)}-\nabla_x
W_{u}\big):\big(\nabla_x W_{(u+s\delta)}+\nabla_x
W_{u}\big)=\\\frac{1}{2s}\int_0^T
\Big<\Big(s\cdot\partial_t\delta-s\Delta_x\delta+div_x\,
F(u+s\delta)-div_x\,
F(u)\Big)(\cdot,t),\big(W_{(u+s\delta)}+W_{u}\big)(\cdot,t)\Big>\,dt
\\=\int_0^T\int_\O\bigg\{\frac{1}{2}\big(\nabla_x W_{(u+s\delta)}+\nabla_x
W_{u}\big):\nabla_x\delta+\partial_t\delta(x,t)
\cdot\frac{1}{2}\big(W_{(u+s\delta)}(x,t)+W_{u}(x,t)\big)\bigg\}\\
-\int_0^T\int_\O\frac{1}{s}\big(F(u+s\delta)-
F(u)\big):\frac{1}{2}\big(\nabla_x W_{(u+s\delta)}+\nabla_x
W_{u}\big)\,dxdt\,.
\end{multline}
Since $F$ is Lipschitz and $C^1$, we obtain
\begin{equation}\label{nolkj4}
\frac{1}{s}\big(F(u+s\delta)-
F(u)\big)\to\sum_{j=1}^{N}\delta_j\frac{\partial F}{\partial
u_j}(u)\quad\text{as }s\to 0\quad\text{strongly in
}L^2(\O\times(0,T),\R^{N\times N})\,.
\end{equation}
On the other hand, for every $h(x,t)\in L^2(0,T;V_N)$ we obtain
\begin{multline}\label{nolkj5}
\lim\limits_{s\to 0}\int_0^T\int_\O\big(\nabla_x
W_{(u+s\delta)}-\nabla_x W_{u}\big):\nabla_x h(x,t)=\\
\lim\limits_{s\to
0}\bigg(s\int_0^T\int_\O(\partial_t\delta-\Delta_x\delta)\cdot
h\,dxdt-\int_0^T\int_\O\big(F(u+s\delta)-F(u)\big):\nabla_x
h\,dxdt\bigg)=0\,.
\end{multline}
Therefore
\begin{equation}\label{nolkj6}
W_{(u+s\delta)}\rightharpoonup W_u\quad\text{weakly in
}L^2(0,T;V_N)\,.
\end{equation}
In particular $W_{(u+s\delta)}$ remains bounded in $L^2(0,T;V_N)$ as
$s\to 0$. Therefore, by \er{nolkj3}, we obtain $$\lim\limits_{s\to
0}\int_0^T\int_\O\big(|\nabla_x W_{(u+s\delta)}|^2-|\nabla
W_{u}|^2\big)\,dxdt=0\,.$$ So
\begin{equation}\label{nolkj7}
W_{(u+s\delta)}\to W_u\quad\text{strongly in }L^2(0,T;V_N)\,.
\end{equation}
Therefore, using \er{nolkj7} and \er{nolkj4} in \er{nolkj3}, we
infer
\begin{multline}\label{nolkj8}
\lim\limits_{s\to 0}\frac{1}{2s}\int_0^T\int_\O\Big(|\nabla_x
W_{(u+s\delta)}|^2-|\nabla_x W_{u}|^2\Big)=
\\
\int_0^T\int_\O\bigg\{\nabla_x
W_{u}:\nabla_x\delta+\partial_t\delta\cdot W_{u}
-\Big(\sum_{j=1}^{N}\delta_j\frac{\partial F}{\partial
u_j}(u)\Big):\nabla_x W_{u}\bigg\}\,dxdt\,.
\end{multline}
So, by \er{fyhfypppp} and \er{nolkj8}, we obtain that for every
$\delta(x,t)\in \mathcal{Y}$, such that $\delta(x,0)=0$, we must
have \er{nolkj91}.
\end{proof}
\begin{lemma}\label{EulerLagrange}
Let $u\in \mathcal{R}$ be a minimizer to \er{hgffckaaq1}. Then
$H_u=u$, i.e.
$$\Delta_x u=\partial_t u+div_x\,F(u)+\nabla_x p\,.$$
\end{lemma}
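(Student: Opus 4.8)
The plan is to exploit the vanishing of the first variation of $I_F$ at the minimizer $u$, to recognize the resulting Euler--Lagrange identity as a weak \emph{backward} parabolic problem for $W_u:=u-H_u$ with zero terminal data, and then to close an energy estimate that forces $W_u\equiv 0$.

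First, note that for every $\delta\in\mathcal{Y}$ with $\delta(\cdot,0)=0$ the field $u+s\delta$ again lies in $\mathcal{R}$ and satisfies $(u+s\delta)(\cdot,0)=v_0$, hence is admissible in \er{hgffckaaq1}. Minimality forces the derivative in \rlemma{EulerLagrange1} to vanish, so \er{nolkj91} yields
\[
\int_0^T\int_\O\Big\{\nabla_x W_u:\nabla_x\delta+\partial_t\delta\cdot W_u-c\cdot\delta\Big\}\,dxdt=0
\]
for all such $\delta$, where $c=(c_1,\dots,c_N)$ with $c_j:=\sum_{i,k=1}^N\frac{\partial F_{ik}}{\partial u_j}(u)\,\frac{\partial (W_u)_i}{\partial x_k}$. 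Since $F\in C^1\cap Lip$, the matrix $\partial F/\partial u$ is bounded, so there is $C>0$ (depending only on $F$ and $N$) with $|c|\le C\,|\nabla_x W_u|$; in particular $c\in L^2(\O\times(0,T),\R^N)$.

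Second, I would read off the equation for $W_u$. Testing the identity above with $\delta$ additionally vanishing near $t=T$ (so $\supp\delta\subset\O\times(0,T)$) shows that $\partial_t W_u=-\Delta_x W_u-c$ as an element of $L^2(0,T;V_N^{-1})$, i.e. modulo $x$-gradients. Since $W_u\in L^2(0,T;V_N)$ with $\partial_t W_u\in L^2(0,T;V_N^{-1})$, \rcor{corcor1} gives $W_u\in L^\infty(0,T;L_N)$, and \rlemma{vbnhjjm} lets us take $W_u(\cdot,t)$ to be $L_N$-weakly continuous; thus $W_u\in\mathcal{R}$. Substituting $\partial_t W_u$ into the integration-by-parts identity \er{eqmult} for $W_u$ on $(0,T)$ and comparing with the displayed identity, the only surviving boundary contribution is $\int_\O W_u(x,T)\cdot\delta(x,T)\,dx$; as $\delta(\cdot,T)$ ranges freely over $\mathcal{V}_N$, which is dense in $L_N$, this gives the terminal condition $W_u(\cdot,T)=0$ in $L_N$.

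Third, the energy estimate. Applying \rlemma{lem2} to $W_u$ on $(0,T)$ and on $(0,\tau)$ and subtracting, then using $W_u(\cdot,T)=0$ together with the fact that the field $V_W$ corresponding to $\partial_t W_u=-\Delta_x W_u-c$ satisfies $\int_\O\nabla_x V_W:\nabla_x W_u=-\int_\O|\nabla_x W_u|^2+\int_\O c\cdot W_u$ (by Remark~\ref{rem1}), I obtain for every $\tau\in[0,T]$
\[
\frac12\int_\O|W_u(x,\tau)|^2\,dx+\int_\tau^T\int_\O|\nabla_x W_u|^2\,dxdt=\int_\tau^T\int_\O c\cdot W_u\,dxdt.
\]
Bounding the right-hand side via $|c|\le C|\nabla_x W_u|$ and Young's inequality absorbs the gradient term and leaves $\int_\O|W_u(x,\tau)|^2\,dx\le C^2\int_\tau^T\int_\O|W_u|^2\,dxdt$. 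With $g(\tau):=\int_\tau^T\int_\O|W_u|^2$ this reads $-g'(\tau)\le C^2 g(\tau)$, and since $g(T)=0$, $g\ge0$, the backward Gronwall inequality forces $g\equiv0$, hence $W_u\equiv0$, that is $H_u=u$; equivalently $\Delta_x u=\partial_t u+div_x\,F(u)+\nabla_x p$. The genuinely delicate point is not the algebra but this last rigorous passage from the weak Euler--Lagrange identity to the energy identity: one must confirm $\partial_t W_u\in L^2(0,T;V_N^{-1})$, produce the $L_N$-weakly continuous representative of $W_u$, and check that the chain-rule/integration-by-parts results \rlemma{vbnhjjm} and \rlemma{lem2} legitimately apply to $W_u$, so that $\int\partial_t W_u\cdot W_u$ may be read as $\tfrac12\frac{d}{dt}\|W_u\|_{L_N}^2$. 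The Young--Gronwall step itself is routine.
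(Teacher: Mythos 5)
Your proposal is correct and follows essentially the same route as the paper: vanishing first variation via Lemma \ref{EulerLagrange1}, identification of $\partial_t W_u\in L^2(0,T;V_N^{-1})$ and of the terminal condition $W_u(\cdot,T)=0$ by comparing the Euler--Lagrange identity with the integration-by-parts formula, then the backward energy estimate closed by Gronwall. The only differences are cosmetic (you keep the lower-order term as an $L^2$ vector field $c$ rather than passing to its $V_N$-representative $Q$, and you absorb the gradient term by Young's inequality in one step where the paper uses two Cauchy--Schwarz estimates), and the "delicate points" you flag at the end are exactly the ones the paper addresses with Corollary \ref{corcor1}, Lemma \ref{vbnhjjm} and Lemma \ref{lem2}.
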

\begin{proof}
Let $\delta(x,t)\in \mathcal{Y}$ be such that $\delta(x,0)=0$. Then
for every $s\in\R$ $(u+s\delta)\in\mathcal{R}$ and
$(u+s\delta)(\cdot,0)=v_0(\cdot)$. Therefore,
\begin{equation}\label{nolkj}
\lim\limits_{s\to 0}\frac{I_{F}(u+s\delta)-I_{F}(u)}{s}=0\,.
\end{equation}
So, by \er{nolkj91} in Lemma \ref{EulerLagrange1}, for every
$\delta(x,t)\in \mathcal{Y}$ such that $\delta(x,0)=0$ we must have
\begin{equation}\label{nolkj989}
\int_0^T\int_\O\bigg\{\nabla_x
W_{u}:\nabla_x\delta+\partial_t\delta\cdot W_{u}
-\Big(\sum_{j=1}^{N}\delta_j\frac{\partial F}{\partial
u_j}(u)\Big):\nabla_x W_{u}\bigg\}\,dxdt=0\,,
\end{equation}
where $W_u\in L^2(0,T;V_N)$ defined by $W_u=u-H_u$.
Since $\frac{\partial F}{\partial u_j}\in L^\infty$, we obtain that
the functional $L(\phi):V_N\to\R$ defined by
$$L(\phi):=\int_\O\Big(\sum_{j=1}^{N}\phi_j\frac{\partial
F}{\partial u_j}(u)\Big):\nabla_x W_{u}\,dx$$ belongs to $V_N^{-1}$
for a.e. $t\in(0,T)$. Moreover there exists $Q(x,t)\in L^2(0,T;V_N)$
such that for a.e. $t\in(0,T)$ we have
$$L(\phi):=\int_\O\Big(\sum_{j=1}^{N}\phi_j\frac{\partial
F}{\partial u_j}(u)\Big):\nabla_x W_{u}\,dx=\int_\O\nabla_x
Q(x,t):\nabla_x\phi(x)\,dx\quad\forall \phi\in V_N\,.$$ Then from
\er{nolkj989} we obtain that $\partial_t W_u\in L^2(0,T;V_N^{-1})$
and we have
\begin{multline}\label{eqnmmxx}
<\partial_t
W_u(\cdot,\cdot),\psi(\cdot,\cdot)>=-\int_0^T\int_\O\nabla_x(Q-W_u):\nabla_x\psi\,dxdt
\\ \forall\psi\in C_c^\infty(\O\times(0,T),\R^N)\text{ s.t.
}div_x\,\psi=0\,.
\end{multline}
Therefore, by Corollary \ref{corcor1} $W_u\in L^\infty(0,T;L_N)$ and
by Lemma \ref{vbnhjjm} we can redefine $W_u(\cdot,t)$ on a set of
Lebesgue measure zero on $[0,T]$ so that $W_u(\cdot,t)$ be
$L_N$-weakly continuous in $t$ on $[0,T]$. From now we consider such
$W_u$. Moreover, by \er{eqmultnep} and \er{eqnmmxx}, for every
$\delta\in\mathcal{Y}$, such that $\delta(x,0)=0$, we obtain
\begin{multline*}
\int_0^T\int_\O\nabla_x (Q-W_u):\nabla_x\delta\,
dxdt-\int_0^T\int_\O W_u\cdot\partial_t\delta\,dxdt =-\int_\O
W_u(x,T)\cdot\delta(x,T)dx\,,
\end{multline*}
or in the another form
\begin{multline}\label{eqmultnepche}
\int_0^T\int_\O\nabla_x W_u:\nabla_x\delta\,
dxdt-\int_0^T\int_\O\Big(\sum_{j=1}^{N}\delta_j\frac{\partial
F}{\partial u_j}(u)\Big):\nabla_x W_{u}\,dxdt\\+\int_0^T\int_\O
W_u\cdot\partial_t\delta\,dxdt -\int_\O
W_u(x,T)\cdot\delta(x,T)dx=0\,.
\end{multline}
Comparing \er{eqmultnepche} with \er{nolkj989}, we obtain that
$W_u(\cdot,T)=0$. Therefore, by Corollary \ref{corcor1} and Lemma
\ref{lem2},
for every $t\in[0,T]$ we obtain
$$\int_t^T\int_\O\nabla_x W_u:\nabla_x (Q-W_u)\,dxds=\frac{1}{2}\int_\O W_u^2(x,t)dx
\,,$$
or in the equivalent form
\begin{equation}\label{eqbbbkl}
\int_t^T\int_\O|\nabla_x W_u|^2\,dxds+\frac{1}{2}\int_\O
W_u^2(x,t)dx
=\int_t^T\int_\O\Big(\sum_{j=1}^{N}\,(W_u)_j\,\frac{\partial
F}{\partial u_j}(u)\Big):\nabla_x W_{u}\,dxds\,.
\end{equation}
In particular there exists $C>0$, independent of $t$, such that
\begin{multline*}
\int_t^T\int_\O|\nabla_x W_u|^2\,dxds\leq
\int_t^T\int_\O\Big(\sum_{j=1}^{N}\,(W_u)_j\,\frac{\partial
F}{\partial u_j}(u)\Big):\nabla_x W_{u}\,dxds\\ \leq
C\bigg(\int_t^T\int_\O|\nabla_x
W_u|^2\,dxds\,\cdot\,\int_t^T\int_\O|W_u|^2\,dxds\bigg)^{1/2}\,.
\end{multline*}
So
\begin{equation}\label{fgfd}
\int_t^T\int_\O|\nabla_x W_u|^2\,dxds\leq C^2\int_t^T\int_\O
|W_u|^2\,dxds\,.
\end{equation}
Then, using \er{eqbbbkl} and \er{fgfd} we obtain
\begin{multline}\label{fcvvvvv}
\frac{1}{2}\int_\O W_u^2(x,t)dx\leq
\int_t^T\int_\O\Big(\sum_{j=1}^{N}\,(W_u)_j\,\frac{\partial
F}{\partial u_j}(u)\Big):\nabla_x W_{u}\,dxds\\ \leq
C\bigg(\int_t^T\int_\O|\nabla_x
W_u|^2\,dxds\,\cdot\,\int_t^T\int_\O|W_u|^2\,dxds\bigg)^{1/2}\leq
C^2\int_t^T\int_\O|W_u|^2\,dxds\,.
\end{multline}
%
%
%
%
%
%
%
%
Then by Gronwall's Lemma $\int_\O W_u^2(x,t)dx=0$. So, by definition
of $W_u$ we obtain $H_u=u$. This completes the proof.
\end{proof}
\begin{theorem}\label{premain}
For every $v_0(\cdot)\in V_N$ there exists a minimizer $u$ to
\er{hgffckaaq1}. It satisfies $H_u=u$, i.e.
$$\Delta_x u=\partial_t u+div_x\,F(u)+\nabla_x p\,,$$
$u(x,0)=v_0(x)$ and
\begin{equation}\label{yteq}
\frac{1}{2}\int_\O u^2(x,\tau)dx+\int_0^\tau\int_\O|\nabla_x
u|^2\,dxdt
=\frac{1}{2}\int_\O v_0^2(x)dx\quad\quad\quad\forall \tau\in[0,T]\,.
\end{equation}
Moreover if $v\in \mathcal{R}$ satisfy $v(\cdot,0)=v_0(\cdot)$ and
$H_v=v$, i.e. $\Delta_x v=\partial_t v+div_x\,F(v)+\nabla_x p$, then
$v$ is a minimizer to \er{hgffckaaq1}.
\end{theorem}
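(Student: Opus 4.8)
The plan is to establish existence by the direct method in the calculus of variations, then to invoke \rlemma{EulerLagrange} to turn a minimizer into a solution of the PDE, and finally to read off the energy equality and the converse assertion from the reformulation \er{fyhfypppp}. First I would note that the admissible class is nonempty and the infimum finite: the constant-in-time field $u(x,t)\equiv v_0(x)$ lies in $\mathcal{R}$, satisfies $u(\cdot,0)=v_0$, and has $I_{F}(u)<\infty$ since $v_0\in V_N$ and $F(v_0)\in L^2$. Let $\{u_n\}\subset\mathcal{R}$ with $u_n(\cdot,0)=v_0$ be a minimizing sequence, $I_{F}(u_n)\to m:=\inf$. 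From the form \er{hgffck} the bound $I_{F}(u_n)\leq C$ yields at once that $\{u_n\}$ and $\{H_{u_n}\}$ are bounded in $L^2(0,T;V_N)$ and that $\{u_n(\cdot,T)\}$ is bounded in $L_N$. By Remark~\ref{rem1} the $L^2(0,T;V_N)$-bound on $H_{u_n}$ is exactly a bound on $\partial_t u_n+\Div_x F(u_n)$ in $L^2(0,T;V_N^{-1})$; since $F$ is Lipschitz with $F(0)=0$ and $\O$ is bounded, $\|F(u_n)\|_{L^2(\O\times(0,T))}\leq C\|u_n\|_{L^2(0,T;V_N)}$, so $\Div_x F(u_n)$ is bounded in $L^2(0,T;V_N^{-1})$ and hence so is $\partial_t u_n$. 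Finally, applying the energy identity \er{cchin} of \rcor{lem2'} to each $u_n$ (with $V=H_{u_n}$) and using Cauchy--Schwarz gives a uniform bound on $\|u_n\|_{L^\infty(0,T;L_N)}$.

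Next comes the compactness step. Passing to a subsequence, $u_n\rightharpoonup u$ and $H_{u_n}\rightharpoonup \bar H$ weakly in $L^2(0,T;V_N)$, $\partial_t u_n\rightharpoonup\partial_t u$ weakly in $L^2(0,T;V_N^{-1})$, and $u_n(\cdot,T)\rightharpoonup \xi$ weakly in $L_N$. Using the uniform $L^\infty(0,T;L_N)$ bound together with the equicontinuity of $t\mapsto\int_\O u_n(\cdot,t)\cdot\phi\,dx$ for $\phi\in\mathcal{V}_N$ (a Hölder-$\tfrac12$ estimate coming from the $L^2(0,T;V_N^{-1})$-bound on $\partial_t u_n$), a standard Arzel\`a--Ascoli and diagonal argument gives, along a further subsequence, $u_n(\cdot,t)\rightharpoonup u(\cdot,t)$ weakly in $L_N$ for every $t\in[0,T]$; in particular $\xi=u(\cdot,T)$ and $u(\cdot,0)=v_0$. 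Then \rlemma{ComTem1} yields $u_n\to u$ strongly in $L^2(0,T;L_N)$, whence $F(u_n)\to F(u)$ strongly in $L^2(\O\times(0,T))$ by the Lipschitz bound. This strong convergence lets me pass to the limit in the defining identity for $H_{u_n}$ and identify $\bar H=H_u$; combined with $\partial_t u\in L^2(0,T;V_N^{-1})$ and \rlemma{vbnhjjm}, it shows $u\in\mathcal{R}$, so $u$ is admissible.

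Lower semicontinuity now finishes the existence argument: the two Dirichlet terms in \er{hgffck} are weakly lower semicontinuous under $u_n\rightharpoonup u$ and $H_{u_n}\rightharpoonup H_u$, while $\int_\O|u(\cdot,T)|^2\leq\liminf_n\int_\O|u_n(\cdot,T)|^2$ by the weak $L_N$-convergence at $t=T$; hence $I_{F}(u)\leq\liminf_n I_{F}(u_n)=m$, and since $u$ is admissible, $I_{F}(u)=m$ and $u$ is a minimizer. By \rlemma{EulerLagrange} we conclude $H_u=u$, i.e. $\Delta_x u=\partial_t u+\Div_x F(u)+\nabla_x p$. Feeding $H_u=u$ into \er{cchin} (with $V=H_u=u$) gives $\int_0^\tau\int_\O|\nabla_x u|^2\,dxdt=\tfrac12\big(\int_\O v_0^2\,dx-\int_\O u^2(\cdot,\tau)\,dx\big)$, which is precisely \er{yteq}.

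For the converse I would use the form \er{fyhfypppp}: for every admissible $w\in\mathcal{R}$ one has $I_{F}(w)\geq\tfrac12\int_\O v_0^2\,dx$, with equality if and only if $\nabla_x(w-H_w)\equiv0$, i.e. $H_w=w$. Hence any $v\in\mathcal{R}$ with $v(\cdot,0)=v_0$ and $H_v=v$ satisfies $I_{F}(v)=\tfrac12\int_\O v_0^2\,dx=m$ and is therefore a minimizer (and, incidentally, this pins the value of the infimum at $\tfrac12\int_\O v_0^2\,dx$). I expect the main obstacle to be the compactness step: securing the uniform $L^\infty(0,T;L_N)$ bound and the pointwise-in-$t$ weak $L_N$-convergence needed to apply \rlemma{ComTem1}, since the resulting strong $L^2(0,T;L_N)$ convergence is exactly what makes it possible both to pass to the limit in the nonlinear term $F(u_n)$ (so as to identify $\bar H=H_u$) and to verify that the limit $u$ lies in $\mathcal{R}$.
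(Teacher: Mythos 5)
Your proposal is correct and follows essentially the same route as the paper: direct method with a priori bounds from \er{hgffck}, the uniform $L^\infty(0,T;L_N)$ bound via the energy identity of Corollary \ref{lem2'}, strong $L^2(0,T;L_N)$ compactness via Lemma \ref{ComTem1} to pass to the limit in $F(u_n)$ and identify $\bar H=H_u$, then Lemma \ref{EulerLagrange} for $H_u=u$ and the reformulation \er{fyhfypppp} for the converse. The only cosmetic difference is that you obtain the pointwise-in-$t$ weak $L_N$-convergence by an equicontinuity/Arzel\`a--Ascoli argument from the $L^2(0,T;V_N^{-1})$-bound on $\partial_t u_n$, whereas the paper reads it off directly by passing to the limit in the integral identity \er{eqmultnep}; both are valid.
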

\begin{proof}
First of all we want to note that the set
$A_{v_0}:=\{u\in\mathcal{R}:\,u(\cdot,0)=v_0(\cdot)\}$ is not empty.
In particular the function $u_0(\cdot,t):=v_0(\cdot)$ belongs to
$A_{v_0}$. Let
$$K:=\inf\limits_{u\in A_{v_0}}I_{F}(u)\,.$$
Then $K\geq 0$. Consider the minimizing sequence $\{u_n\}\subset
A_{v_0}$, i.e. the sequence such that
$\lim_{n\to\infty}I_{F}(u_n)=K$. Then, by the definition of $I_{F}$
in \er{hgffck}, we obtain that there exists $C>0$, independent of
$n$, such that
\begin{equation}\label{oteqm9999}
\int_0^T\int_\O\big(|\nabla_x u_n|^2+|\nabla_x
H_{u_n}|^2\big)\,dxdt\leq C\,.
\end{equation}
Then
up to a
subsequence,
\begin{multline}\label{staeae1glll}
u_n\rightharpoonup u_0 \quad\text{weakly in } L^2(0,T;V_N)\quad
\text{and } H_{u_n}\rightharpoonup\bar H\quad\text{weakly in }
L^2(0,T;V_N)\,.
\end{multline}
From the other hand, by Corollary \ref{lem2'}, for every $t\in
[0,T]$ we have
\begin{equation*}
\int_\O u_n^2(x,t)dx=\int_\O u_n^2(x,0)dx- 2\int_0^t\int_\O\nabla_x
u_n:\nabla_x H_{u_n} \,.
\end{equation*}
Therefore, since, $u_n$ and $H_{u_n}$ are bounded in $L^2(0,T;V_N)$
by \er{oteqm9999} and $u_n(\cdot,0)=v_0(\cdot)$
we obtain that there exists $C>0$ independent of $n$ and $t$ such
that
\begin{equation}\label{bonmd}
\|u_n(\cdot,t)\|_{L_N}\leq C\quad\forall n\in\mathbb{N}, t\in[0,T].
\end{equation}
In particular, up to a further subsequence
$F(u_n)\rightharpoonup\bar F$ weakly in
$L^2(\O\times(0,T),\R^{N\times N})$. Then by \er{staeae1glll} and
\er{eqmultnep}, for every $t\in[0,T]$ and for every
$\phi\in\mathcal{V}_N$, we have
\begin{multline}\label{eqmultnesnnndds}
\lim\limits_{n\to\infty}\int_\O
u_n(x,t)\cdot\phi(x)dx=\\\lim\limits_{n\to\infty}\Bigg(\int_\O
u_n(x,0)\cdot\phi(x)dx-\int_0^t\int_\O\nabla_x H_{u_n}:\nabla_x\phi
+\int_0^t\int_\O F(u_n):\nabla_x\phi\Bigg)\\=\int_\O
v_0(x)\cdot\phi(x)dx-\int_0^t\int_\O\nabla_x \bar H:\nabla_x\phi
+\int_0^t\int_\O\bar F:\nabla_x\phi\,.
\end{multline}
Since $\mathcal{V}_N$ is dense in $L_N$, by \er{bonmd}, and
\er{eqmultnesnnndds}, for every $t\in[0,T]$ there exists
$u(\cdot,t)\in L_N$ such that
\begin{equation}\label{waeae1}
u_n(\cdot,t)\rightharpoonup u(\cdot,t) \quad\text{weakly in }
L_N\quad\forall t\in[0,T]\,.
\end{equation}
Moreover,
$\|u(\cdot,t)\|_{L_N}\leq C$. But we have $u_n\rightharpoonup u_0$
weakly in $L^2(0,T;L_N)$, therefore $u=u_0$ a.e. and so $u\in
L^2(0,T;V_N)\cap L^\infty(0,T;L_N)$. Then using \er{bonmd},
\er{staeae1glll}, \er{waeae1} and Lemma \ref{ComTem1} we deduce that
\begin{equation}\label{strtrtrhhhk}
u_n\to u\quad \text{strongly in }L^2(0,T;L_N)\,.
\end{equation}
Moreover, by \er{eqmultnesnnndds} we obtain that $u(\cdot,t)$ is
$L_N$-weakly continuous in $t$ on $[0,T]$.
Therefore, by \er{waeae1} and \er{staeae1glll},
\begin{multline}\label{muuytkkk}
\int_0^T\int_\O|\nabla_x
u|^2\,dxdt+\int_\O|u(x,T)|^2dx\leq\lim\limits_{n\to\infty}\bigg(\int_0^T\int_\O|\nabla_x
u_n|^2\,dxdt+\int_\O|u_n(x,T)|^2dx\bigg)\,.
\end{multline}
Next for every $\psi(x,t)\in C^\infty_c(\O\times(0,T),\R^N)$ such
that $div_x\,\psi=0$ we obtain
\begin{multline}\label{nnhogfc}
\lim\limits_{n\to\infty}\int_0^T\int_\O
\big(u_n\cdot\partial_t\psi+F(u_n):\nabla_x
\psi\big)dxdt=\\\lim\limits_{n\to\infty}\int_0^T\int_\O\nabla_x
H_{u_n}:\nabla_x\psi\, dxdt=\int_0^T\int_\O\nabla_x \bar
H:\nabla_x\psi\,dxdt\,.
\end{multline}
But since $F$ is a Lipschitz function, by \er{strtrtrhhhk} we obtain
$$\lim\limits_{n\to\infty}\int_0^T\int_\O
\big(u_n\cdot\partial_t\psi+F(u_n):\nabla_x
\psi\big)dxdt=\int_0^T\int_\O
\big(u\cdot\partial_t\psi+F(u):\nabla_x \psi\big)dxdt$$ So, by
\er{nnhogfc}, for every $\psi(x,t)\in
C^\infty_c(\O\times(0,T),\R^N)$ such that $div_x\,\psi=0$ we deduce
\begin{equation}\label{nnhogfc1}
\int_0^T\int_\O\big(u\cdot\partial_t\psi+F(u):\nabla_x
\psi\big)dxdt=\int_0^T\int_\O\nabla_x \bar H:\nabla_x\psi\,dxdt\,.
\end{equation}
In particular $\partial_t u+div_x\, F(u)\in L^2(0,T;V_N^{-1})$.
Therefore $\partial_t u\in L^2(0,T;V_N^{-1})$ and then $u\in
A_{v_0}=\{u\in\mathcal{R}:\,u(\cdot,0)=v_0(\cdot)\}$. Moreover, by
\er{nnhogfc1}, we obtain that $H_u=\bar H$. So, as before,
\begin{equation}\label{muuytkkk888}
\int_0^T\int_\O|\nabla_x
H_u|^2\,dxdt\leq\lim\limits_{n\to\infty}\int_0^T\int_\O|\nabla_x
H_{u_n}|^2\,dxdt\,.
\end{equation}
Combining \er{muuytkkk888} with \er{muuytkkk}, we infer
$$I_{F}(u)\leq\lim_{n\to\infty}I_{F}(u_n)=K\,.$$
Therefore, $u$ is a minimizer to \er{hgffckaaq1}. By Lemma
\ref{EulerLagrange} it satisfies $H_u=u$, i.e.
$$\Delta_x u=\partial_t u+div_x\,F(u)+\nabla_x p\,.$$
Moreover, by Lemma \ref{lem2}, for every $t\in [0,T]$ we have
$$\int_0^t\int_\O\nabla_x u:\nabla_x H_{u}\,
=\frac{1}{2}\bigg(\int_\O v_0^2(x)dx-\int_\O u^2(x,t)dx\bigg)\,.$$
Therefore we obtain \er{yteq}. Moreover,
$I_{F}(u)=\frac{1}{2}\int_\O v_0^2(x)dx$.
Finally if $v\in \mathcal{R}$ satisfy $v(\cdot,0)=v_0(\cdot)$ and
$H_v=v$ then by \er{fyhfypppp} we have $I_{F}(v)=\frac{1}{2}\int_\O
v_0^2(x)dx=I_{F}(u)$.
So $v$ is a minimizer to \er{hgffckaaq1}.
\end{proof}
\begin{remark}
For a fixed $r(x,t)\in L^2(0,T;V_N)$ we can define a functional
$\bar I_{\{F,r\}}(u):\mathcal{R}\to\R$ by
\begin{equation}\label{hgffckwithr}
\bar I_{\{F,r\}}(u):=\frac{1}{2}\bigg(\int_0^T\int_\O\Big(|\nabla_x
u+\nabla_x r|^2+|\nabla_x H_u-\nabla_x
r|^2\Big)dxdt+\int_\O|u(x,T)|^2dx\bigg)\,,
\end{equation}
and for every $v_0\in V_N$ we can consider the minimization problem
\begin{equation}\label{hgffckaaq1withr}
\inf\{\bar
I_{\{F,r\}}(u):\,u\in\mathcal{R},u(\cdot,0)=v_0(\cdot)\}\,.
\end{equation}
Then similarly to the proof of Theorem \ref{premain} we can prove
that there exists a minimizer $u$ to \er{hgffckaaq1withr} and it
satisfies $H_u=u+r$, i.e.
$$\Delta_x u+\Delta_x r=\partial_t u+div_x\,F(u)+\nabla_x p\,.$$
Then, using this fact, as in the proof of Theorem
\ref{WeakNavierStokes} we can deduce the existence of a weak
solution to \er{IBNS} with $f\in L^2(0,T;V^{-1}_N)$.
\end{remark}
\begin{remark}
Similar method as in the proof of Theorem \ref{WeakNavierStokes} we
can apply to the unbounded domain $\O$. In this case we consider a
sequence of smooth bounded domains $\{\O_n\}$, such that
$\O_n\subset\O_{n+1}$ and $\bigcup_{n=1}^{\infty}\O_n=\O$, and a
sequence $v_0^{(n)}\to v_0$ in $L_N$, such that $\supp
v_0^{(n)}\subset\O_n$. Consider $u_n(x,t)\in\mathcal{R}(\O_n)$, such
that $u_n(\cdot,0)=v_{0}^{(n)}(\cdot)$ and  for every $\psi(x,t)\in
C^\infty_c(\O_n\times(0,T),\R^N)$, satisfying $div_x\,\psi=0$, we
have \er{nnhogfc48nnj}, where $F_n$ is defined by \er{klnlnkmlm}.
Then
we can deduce that there exists $u\in L^2(0,T;V_N)\cap
L^\infty(0,T;L_N)$ such that, up to a subsequence, $u_n\to u$
strongly in $L^2_{loc}(\O\times(0,T),\R^N)$. Then $u$ will satisfy
conditions of Theorem \ref{WeakNavierStokes}.
\end{remark}

\section{Variational principle for more regular solutions of the Navier-Stokes
Equations}\label{odin} Let $\O\subset\R^N$ be a domain with
Lipschitz boundary (not necessarily bounded). We denote by $H_N$ the
closure of $\mathcal{V}_N$ in $H^1_0(\O,\R^N)$ (the spaces $H_N$ and
$V_N$ differ only in the case of unbounded domain). For every $u\in
L^4(\O\times(0,T),\R^N)$ we have $(u\otimes u)\in
L^2\big(0,T;L^2(\O,\R^{N\times N})\big)$ and therefore
$div_x\,(u\otimes u)\in L^2(0,T;V_N^{-1})$. If in addition
$\partial_t u\in L^2(0,T;V_N^{-1})$ then we obtain $\partial_t
u+div_x\, (u\otimes u)\in L^2(0,T;V_N^{-1})$.
\begin{definition}\label{defH1}
Let $u\in L^2(0,T;H_N)\cap L^\infty(0,T;L_N)$ be such that
$\partial_t u\in L^2(0,T;V_N^{-1})$ and such that $u(\cdot,t)$ is
$L_N$-weakly continuous in $t$ on $[0,T]$. Denote the set of all
such functions $u$ by $\mathcal{R'}$. Denote the set
$\mathcal{R'}\cap L^4(\O\times(0,T),\R^N)$ by $\mathcal{P}$. For
every $u\in\mathcal{P}$ let $\bar H_u(\cdot,t)\in L^2(0,T;V_N)$ be
as in Remark \ref{rem1}, corresponding to $\partial_t u+div_x\,
(u\otimes u)$. That is for every $\psi(x,t)\in
C^\infty_c(\O\times(0,T),\R^N)$ such that $div_x\,\psi=0$ we have
\begin{equation*}
\int_0^T\int_\O\big(u\cdot\partial_t\psi+(u\otimes u):\nabla_x
\psi\big)dxdt=\int_0^T\int_\O\nabla_x \bar
H_u:\nabla_x\psi\,dxdt\,.
\end{equation*}
For a fixed $r(x,t)\in L^2(0,T;V_N)$ define a functional
$J_{\{\f,r\}}(u):\mathcal{P}\to\R$ by
\begin{equation}\label{hgffck1}
J_{\{\f,r\}}(u):=\frac{1}{2}\bigg(\int_0^T\int_\O\Big(|\nabla_x
u+\nabla_x r|^2+|\nabla_x \bar H_u-\nabla_x
r|^2\Big)dxdt+\int_\O|u(x,T)|^2dx\bigg)\,.
\end{equation}
\end{definition}
\begin{theorem}\label{vari}
Let $v_0\in L_N$ and $r(x,t)\in L^2(0,T;V_N)$. Assume that there
exists $u\in\mathcal{P}$ which satisfies $u(x,0)=v_0(x)$, and
\begin{equation}\label{nnhogfc481}
\int_0^T\int_\O\big(u\cdot\partial_t\psi+(u\otimes u):\nabla_x
\psi\big)dxdt=\int_0^T\int_\O(\nabla_x u+\nabla_x
r):\nabla_x\psi\,dxdt\,
\end{equation}
for every $\psi(x,t)\in C^\infty_c(\O\times(0,T),\R^N)$, such that
$div_x\,\psi=0$, i.e.
$$\Delta_x u=\partial_t u+div_x\,(u\otimes u)+\nabla_x p-\Delta_x r\,.$$
Then $u$ is a minimizer of the following problem
\begin{equation}\label{hgffckaaq1111}
\inf\{J_{\{\f,r\}}(u):\,u\in\mathcal{P},u(\cdot,0)=v_0(\cdot)\}\,.
\end{equation}
Moreover if $\bar u$ is a minimizer to \er{hgffckaaq1111}, then
$\bar u$ is a solution to \er{nnhogfc481}.
\end{theorem}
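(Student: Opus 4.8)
The plan is to reduce the whole statement to one algebraic reformulation of $J_{\{\f,r\}}$, exactly as was done for $I_F$ in \er{fyhfypppp} of Remark~\ref{rmmmdd} and for the heat equation in the Introduction. The single analytic ingredient is the energy identity
\begin{equation*}
\int_0^T\int_\O\nabla_x u:\nabla_x\bar H_u\,dxdt=\frac{1}{2}\bigg(\int_\O u^2(x,0)\,dx-\int_\O u^2(x,T)\,dx\bigg)\qquad\forall\,u\in\mathcal{P}\,,
\end{equation*}
the analog of \er{cchin} in \rcor{lem2'} for the nonlinearity $F(u)=u\otimes u$. I would prove it as follows. Since $u\in\mathcal{P}\subset L^4(\O\times(0,T),\R^N)$, we have $u\otimes u\in L^2(0,T;L^2(\O,\R^{N\times N}))$, hence $div_x\,(u\otimes u)$ together with $\partial_t u$ lies in $L^2(0,T;V_N^{-1})$ and $\bar H_u$ is well defined. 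Applying \rlemma{lem2} to the part $\partial_t u$ and observing that the element of $L^2(0,T;V_N)$ associated with $div_x\,(u\otimes u)$ contributes exactly $\int_0^T\int_\O(u\otimes u):\nabla_x u\,dxdt$, the identity reduces to $\int_\O(u\otimes u):\nabla_x u\,dx=0$ for a.e. $t$. For such $t$ we have $u(\cdot,t)\in H_N$; approximating by $\delta_n\in\mathcal{V}_N$ in $H^1_0$ (and hence in $L^4$) and using $\int_\O(\delta_n\otimes\delta_n):\nabla_x\delta_n=\int_\O\delta_n\cdot\nabla_x\big(\frac{1}{2}|\delta_n|^2\big)=-\int_\O\frac{1}{2}|\delta_n|^2\,div_x\,\delta_n=0$, the cancellation passes to the limit.

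Granting this identity I complete the square. Writing $|\nabla_x u|^2+|\nabla_x\bar H_u|^2=|\nabla_x u-\nabla_x\bar H_u|^2+2\nabla_x u:\nabla_x\bar H_u$ and substituting the energy identity, a direct expansion of \er{hgffck1} yields
\begin{equation*}
J_{\{\f,r\}}(u)=\frac{1}{2}\int_\O|u(x,0)|^2\,dx+\frac{1}{2}\int_0^T\int_\O\big|\nabla_x u-\nabla_x\bar H_u+\nabla_x r\big|^2\,dxdt+\frac{1}{2}\int_0^T\int_\O|\nabla_x r|^2\,dxdt\,.
\end{equation*}
On the admissible class $\{u\in\mathcal{P}:u(\cdot,0)=v_0\}$ the first term equals $\frac{1}{2}\int_\O v_0^2$ and the last term does not depend on $u$, so minimizing $J_{\{\f,r\}}$ is the same as minimizing the nonnegative middle term, which vanishes precisely when $\nabla_x\bar H_u=\nabla_x u+\nabla_x r$, i.e. when $u$ solves \er{nnhogfc481}.

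Both claims follow immediately. For the first, the hypothesis furnishes $u\in\mathcal{P}$ with $\bar H_u=u+r$, so its middle term is zero and $J_{\{\f,r\}}(u)=\frac{1}{2}\int_\O v_0^2+\frac{1}{2}\int_0^T\int_\O|\nabla_x r|^2$; by the reformulation every competitor satisfies $J_{\{\f,r\}}\ge$ this value, whence $u$ minimizes \er{hgffckaaq1111}. For the converse, a minimizer $\bar u$ cannot have $J_{\{\f,r\}}(\bar u)$ larger than the value attained by the solution $u$ just exhibited; since the reformulation bounds $J_{\{\f,r\}}(\bar u)$ below by that same value, the middle term of $\bar u$ must vanish, giving $\nabla_x\bar H_{\bar u}=\nabla_x\bar u+\nabla_x r$, i.e. $\bar u$ solves \er{nnhogfc481}.

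The main obstacle is precisely the energy identity in the class $\mathcal{P}$: in contrast with \rcor{lem2'}, the nonlinearity $u\otimes u$ is neither globally Lipschitz nor an element of $\mathfrak{F}$, so that corollary cannot be quoted directly. The real work is to extend the integration-by-parts formula \er{eqmultnep} (through \rlemma{vbnhjjm} and \rcor{colkk}) to $F(u)=u\otimes u$ using only $u\otimes u\in L^2(0,T;L^2)$, and to secure the cancellation $\int_\O(u\otimes u):\nabla_x u\,dx=0$ via the $\mathcal{V}_N$-approximation above; once these are established the remaining algebra and the variational characterization are routine.
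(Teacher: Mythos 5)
Your proof is correct and follows essentially the same route as the paper: the energy identity $\int_0^T\int_\O\nabla_x u:\nabla_x\bar H_u=\frac{1}{2}\big(\int_\O u^2(x,0)-\int_\O u^2(x,T)\big)$ on $\mathcal{P}$ (which the paper obtains by citing Theorem~4.1 of \cite{Gal}, and which you re-derive via \rlemma{lem2} together with the cancellation $\int_\O(u\otimes u):\nabla_x u\,dx=0$), followed by the identical completion of the square yielding $J_{\{\f,r\}}(u)=\frac{1}{2}\int_\O|u(x,0)|^2+\frac{1}{2}\int_0^T\int_\O|\nabla_x u+\nabla_x r-\nabla_x\bar H_u|^2+\frac{1}{2}\int_0^T\int_\O|\nabla_x r|^2$, from which both directions of the statement are immediate. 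The only loose point is the aside that $\mathcal{V}_N$-approximation in $H^1_0$ automatically gives approximation in $L^4$, which relies on the Sobolev embedding and hence on $N\leq 4$; this does not change the comparison, since the paper delegates exactly this identity to the cited reference.
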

\begin{proof}
In the same way as in the proof of Theorem 4.1 in \cite{Gal} we
obtain that for every $\bar u\in \mathcal{P}$ we must have
\begin{equation}\label{cchin111}
\int_0^T\int_\O\nabla_x \bar u:\nabla_x H_{\bar
u}\,dxdt=\frac{1}{2}\bigg(\int_\O\bar u^2(x,0)dx -\int_\O\bar
u^2(x,T)dx\bigg)\,.
\end{equation}
Therefore,
\begin{equation}\label{cchin111111}
J_{\{\f,r\}}(\bar u)=\frac{1}{2}\int_0^T\int_\O\big(|\nabla_x \bar
u+\nabla_x r-\nabla_x H_{\bar u}|^2+|\nabla_x
r|^2\big)\,dxdt+\frac{1}{2}\int_\O\bar u^2(x,0)dx\,.
\end{equation}
Therefore, $u\in\mathcal{P}$ which satisfy $u(x,0)=v_0(x)$ and
$\nabla_x u+\nabla_x r=\nabla_x H_{u}$ will be the minimizer to
\er{hgffckaaq1111}. Then also every minimizer $\bar u$ will satisfy
$\nabla_x \bar u+\nabla_x r=\nabla_x H_{\bar u}$, i.e. will satisfy
\er{nnhogfc481}.
\end{proof}

\appendix
\section{}
\begin{proof}[Proof of Lemma \ref{ComTem1}]
%
%
%
%
%
%
%
%
Since for every $n=0,1,\ldots$ and every $t\in(0,T)$ the functional
$l_{n,t}(\phi):=\int_\O u_n(x,t)\cdot \phi(x)\,dx$ is bounded in
$H^{1}_0(\O,\R^N)$, by Rietz Representation Theorem for every
$n=0,1,\ldots$ and every $t\in[0,T]$ there exists $w_n(\cdot,t)\in
H^{1}_0(\O,\R^N)$ such that
\begin{equation}\label{vhsjklppp}
l_{n,t}(\phi)=\int_\O u_n(x,t)\cdot \phi(x)\,dx=\int_\O\nabla_x
w_n(x,t):\nabla_x\phi(x)\,dx\quad\forall\phi\in H^{1}_0(\O,\R^N)\,,
\end{equation}
Equation \er{vhsjklppp} gives in particular
\begin{equation}\label{vhsjklpppvbvbhbjh}
\int_\O u_n(x,t)\cdot w_n(x,t)\,dx=\int_\O|\nabla_x
w_n(x,t)|^2\,dx\,.
\end{equation}
Then we obtain that there exist $C_0,C>0$, independent of $n$ and
$t$, such that
\begin{equation}\label{vhsjklppr}
\int_\O|\nabla_x w_n(x,t)|^2\,dx\leq C_0\int_\O
|u_n(x,t)|^2\,dx\leq C\,.
\end{equation}
Moreover, using \er{waeae12}, \er{vhsjklpppvbvbhbjh} and the compact
embedding of $H^{1}_0(\O,\R^N)$ into $L^2(\O,\R^N)$, we obtain
\begin{equation}\label{vhsjklppdtr}
w_n(\cdot,t)\to w_0(\cdot,t)\quad\text{strongly in }
H^{1}_0(\O,\R^N)\quad\forall t\in(0,T)\,.
\end{equation}
We have $w_n(\cdot,\cdot)\in L^2(0,T;H^{1}_0(\O,\R^N))$ and
moreover, by \er{vhsjklppdtr} and \er{vhsjklppr}, we obtain
\begin{equation}\label{vhsjklppdtrgl}
w_n(\cdot,\cdot)\to w_0(\cdot,\cdot)\quad\text{strongly in }
L^2(0,T;H^{1}_0(\O,\R^N))\,.
\end{equation}
But, by \er{vhsjklppp}, we have,
\begin{equation}\label{vhsjklpddd}
\int_0^T\int_\O|u_n(x,t)|^2\,dxdt=\int_0^T\int_\O\nabla_x
w_n(x,t):\nabla_x u_n(x,t)\,dxdt\,,
\end{equation}
and since $u_n\rightharpoonup u_0$ in $L^2(0,T;V_N)$, by
\er{vhsjklppdtrgl}, we obtain
\begin{multline}\label{vhsjklpdddaa}
\lim\limits_{n\to\infty}\int_0^T\int_\O
|u_n(x,t)|^2\,dxdt=\lim\limits_{n\to\infty}\int_0^T\int_\O\nabla_x
w_n(x,t):\nabla_x u_n(x,t)\,dxdt\\=\int_0^T\int_\O\nabla_x
w_0(x,t):\nabla_x u_0(x,t)\,dxdt= \int_0^T\int_\O|u_0(x,t)|^2\,dxdt
\,.
\end{multline}
But by \er{waeae1stggg} we have
\begin{equation}\label{waeae1glll}
u_n\rightharpoonup u_0 \quad\text{weakly in } L^2(0,T;L_N)\,.
\end{equation}
Therefore, by \er{waeae1glll} and \er{vhsjklpdddaa} we obtain
\er{staeae1gllluk}.
\end{proof}
\begin{proof}[Proof of Corollary \ref{corcor1}]
Let $\eta\in C^\infty_c(\R,\R)$ be a mollifying kernel, satisfying
$\eta\geq 0$, $\int_\R\eta(t)dt=1$, $\supp\eta\subset[-1,1]$ and
$\eta(-t)=\eta(t)$ $\forall t$. Given small $\e>0$ and $\psi(x,t)\in
C^\infty_c(\O\times(2\e,T-2\e),\R^N)$ such that $div_x\,\psi=0$,
define
\begin{equation}\label{molkl}
\psi_\e(x,t):=\frac{1}{\e}\int\limits_0^{T}\eta\Big(\frac{s-t}{\e}\Big)
\psi(x,s)ds\,.
\end{equation}
Then $\psi_\e(x,t)\in C^\infty_c(\O\times(0,T),\R^N)$ and satisfies
$div_x\,\psi_\e=0$. Therefore we obtain
\begin{equation}\label{bnn898989899988}
\int_0^T\int_\O u\cdot\partial_t\psi_\e\,dxdt=\int_0^T\int_\O
\nabla_x V_u:\nabla_x\psi_\e\,dxdt\,,
\end{equation}
where $V_u(\cdot,t)\in L^2(0,T;V_N)$ is as in Remark \ref{rem1},
corresponding to $\partial_t u$. But
\begin{multline*}
\int_0^T\int_\O u\cdot\partial_t\psi_\e\,dxdt= \int_0^T\int_\O
u(x,t)\cdot\bigg(\frac{1}{\e}\int_0^{T}\eta\Big(\frac{s-t}{\e}\Big)
\partial_s\psi(x,s)ds\bigg)\,dxdt=\\
\int_0^T\int_\O
\partial_t \psi(x,t)\cdot\bigg(\frac{1}{\e}\int_0^{T}\eta\Big(\frac{s-t}{\e}\Big)
u(x,s)ds\bigg)\,dxdt=\int_0^T\int_\O
\partial_t \psi(x,t)\cdot u_\e(x,t)\,dxdt\,,
\end{multline*}
where $u_\e(x,t)=\frac{1}{\e}\int_0^{T}\eta((s-t)/\e) u(x,s)ds$. By
the other hand
\begin{multline*}
\int_0^T\int_\O\nabla_x V_u:\nabla_x\psi_\e\,dxdt= \int_0^T\int_\O
\nabla_x
V_u(x,t):\bigg(\frac{1}{\e}\int_0^{T}\eta\Big(\frac{s-t}{\e}\Big)
\nabla_x\psi(x,s)ds\bigg)\,dxdt\\
=\int_0^T\int_\O
\nabla_x\psi(x,t):\nabla_x\bigg(\frac{1}{\e}\int_0^{T}\eta\Big(\frac{s-t}{\e}\Big)
V_u(x,s)ds\bigg)\,dxdt\\=\int_0^T\int_\O\nabla_x\psi(x,t):
\nabla_x(V_u)_\e(x,t)\,dxdt\,,
\end{multline*}
where $(V_u)_\e(x,t)=\frac{1}{\e}\int_0^{T}\eta((s-t)/\e)
V_u(x,s)ds$. Therefore, by \er{bnn898989899988}, we infer
\begin{equation}\label{bnn8989898}
\int_0^T\int_\O u_\e\cdot\partial_t\psi\,dxdt=\int_0^T\int_\O
\nabla_x (V_u)_\e:\nabla_x\psi\,dxdt\,.
\end{equation}
So $\partial_t u_\e\in L^2(2\e,T-2\e;V_N^{-1})$. Moreover $u_\e \in
L^2(0,T;V_N)\cap L^\infty(0,T;L_N)$. We have $u_\e\to u$ and
$(V_u)_\e\to V_u$ strongly in $L^2(0,T;V_N)$ as $\e\to 0$. Moreover,
up to a subsequence $\e_n\to 0$, we have $u_{\e_n}(\cdot,t)\to
u(\cdot,t)$ strongly in $L_N$ a.e. in $[0,T]$. In addition, by Lemma
\ref{lem2}, for every $a,b\in [2\e, T-2\e]$ we have
\begin{equation}\label{nnbbvv888}
\int_a^b\int_\O\nabla_x u_\e:\nabla_x (V_u)_\e\,dxdt=\frac{1}{2}
\bigg(\int_\O u_\e^2(x,a)dx -\int_\O u_\e^2(x,b)dx\bigg)\,.
\end{equation}
Then letting $\e\to 0$ in \er{nnbbvv888}, we obtain that for almost
every $a$ and $b$ in $(0,T)$ we have
$$\int_a^b\int_\O\nabla_x u:\nabla_x V_u\,dxdt=\frac{1}{2}
\bigg(\int_\O u^2(x,a)dx -\int_\O u^2(x,b)dx\bigg)\,.
$$
So $u\in L^\infty(0,T;L_N)$.
\end{proof}

%
%
%
%
%
%
%
%
\end{document}